\let\ORIlabel\label
\let\ORIrefstepcounter\refstepcounter
\AddToHook{package/hyperref/before}{%
  \let\label\ORIlabel
  \let\refstepcounter\ORIrefstepcounter}
\documentclass[final,oneeqnum,onetabnum,onefignum,onethmnum]{siamart220329}

\usepackage{amssymb,mathtools}
\usepackage{graphicx}
\usepackage{booktabs}
\usepackage{multirow}
\usepackage{placeins}
\usepackage{microtype}
\usepackage{cite}

\newsiamremark{remark}{Remark}
\newsiamthm{assumption}{Assumption}

\newcommand{\R}{\mathbb R}
\newcommand{\Sph}{\mathbb S}
\newcommand{\norm}[1]{\left\lVert#1\right\rVert}
\newcommand{\abs}[1]{\left\lvert#1\right\rvert}
\newcommand{\erf}{\operatorname{erf}}

\newcommand{\supp}{\operatorname{supp}}

\title{Optimal Sobolev Approximation by Deterministic and Random
Shallow Sigmoidal Networks}
\headers{Sobolev Approximation by Sigmoidal Networks}
{Z. Fu and Y. Wang}
\author{
Zhaohui Fu\thanks{Institute for Mathematical Sciences,
National University of Singapore, Singapore 119076.
Email: \href{mailto:fuzhmath@gmail.com}{fuzhmath@gmail.com}.}
\and
Yangshuai Wang\thanks{Department of Mathematics,
National University of Singapore, Singapore 119076.
Email: \href{mailto:yswang@nus.edu.sg}{yswang@nus.edu.sg}.}
}

\begin{document}
\maketitle

\begin{abstract}
Shallow networks with prescribed or randomly sampled hidden parameters are
widely used as numerical trial spaces, yet their optimal Sobolev approximation
power with standard smooth sigmoidal activations in general dimension remains
unresolved. We establish the corresponding optimal rates for a class of smooth
sigmoidal activations with Schwartz-class derivative decay, including \(\tanh\),
the logistic sigmoid, and the error function \(\erf\).
We first construct deterministic direction--offset dictionaries with \(M\)
features such that every \(u\in H^k(\Omega)\) can be approximated with error of
order \(M^{-(k-m)/d}\) in \(H^m(\Omega)\) for all \(0\le m\le k\). This rate
is optimal in the sense of Kolmogorov widths for Sobolev balls. We further
prove that dictionaries obtained by independent parameter sampling from any
prescribed density bounded away from zero attain the same approximation
exponent with high probability, up to logarithmic oversampling. The analysis
develops a sigmoidal ridge representation and combines it with deterministic or
probabilistic quadrature in direction--offset space while retaining polynomial
control of the output coefficients. Numerical experiments across a broad range
of dimensions, target regularities, and Sobolev error norms recover the
predicted algebraic rates for both deterministic and random feature
dictionaries.

\end{abstract}

\begin{keywords}
random feature method, shallow neural network, Sobolev approximation,
 spectral approximation, random matrix concentration.
\end{keywords}

\begin{MSCcodes}
41A25, 41A30, 41A63, 42C10, 65D15.
\end{MSCcodes}

\section{Introduction}

The approximation accuracy of neural networks has long been a central
question in their mathematical analysis, as it determines how efficiently
these models can represent functions of prescribed regularity.  In this
work, we study the approximation rates for shallow networks with
sigmoidal activations in the fixed-feature setting, where the hidden layer
is prescribed deterministically or sampled randomly and only the
coefficients in the output layer are fitted.

Classical approximation results already show that shallow sigmoidal networks
can attain the optimal Sobolev exponent when their hidden parameters are
optimized as part of the approximation.  For example, Mhaskar
proved that, for a Sobolev target of smoothness \(s\), a shallow network
with a smooth sigmoidal activation and width \(M=O(N^d)\) can attain the
\(L^p\)-error \(O(N^{-s})=O(M^{-s/d})\)~\cite{MhaskarSigmoid}.
It remains unknown whether the same regularity-dependent rate is attainable
for networks with standard smooth sigmoidal activations when their hidden
parameters are sampled independently.

From a computational perspective, fixed-feature shallow networks have been used
as flexible numerical trial spaces for partial differential equations, with
the hidden parameters prescribed or sampled in advance. The literature employs
several approaches to determine the output coefficients, including collocation
schemes, least-squares procedures, and methods based on weak formulations.
Physics-informed and collocation-based extreme learning machines have been
developed for stationary, evolutionary, nonlinear, and sharp-gradient
problems, while localized and high-dimensional variants address
localization and scalability
\cite{DwivediSrinivasanPIELM,CalabroFabianiSiettosELM,
DongLiLocELM,WangDongHighDimELM}.
Randomized neural trial spaces have also been combined with
Petrov--Galerkin and discontinuous Galerkin formulations
\cite{ShangWangSunRNNPG,SunDongWangLRNNDG}.
In parallel, the random feature method has been developed for stationary
and time-dependent equations, interface problems, and multiscale transport,
with complementary work addressing the conditioning and high-precision
solution of the resulting least-squares systems
\cite{ChenChiEYangRFM,ChenELuoTimeRFM,ChenESunHighPrecisionRFM,
ChenMaWuAPRFM,ChiChenYangInterfaceRFM}.
These developments demonstrate the computational scope of fixed-feature
PDE discretizations, but do not by themselves determine the approximation
properties of the underlying trial spaces.

From an approximation-theoretic perspective, Barron-type integral representations
give dimension-independent \(M^{-1/2}\) approximation rates for
Fourier-moment classes~\cite{BarronSigmoid}. Related kernel random-feature
and quadrature theory expresses sampling complexity through the spectrum of
an associated integral operator~\cite{BachRFQuadrature}; see
\cite{DeVoreHaninPetrova} for a broader account of neural-network
approximation theory.
For ReLU random features, Gonon proved an expected uniform approximation
bound of Monte Carlo order \(M^{-1/2}\) for a Fourier-integrability
class~\cite{GononRFPDE}.
Ming and Yu obtained algebraic and spectral rates for random Fourier features
in elliptic problems, but their analysis is restricted to one
dimension~\cite{MingYuRFM}.
For shallow \(\operatorname{ReLU}^r\) networks, Mao, Siegel, and Xu proved
the classical \(M^{-s/d}\) order up to
\(s=r+(d+1)/2\)~\cite{MaoSiegelXuReLU}, and subsequent fixed-feature upper
and lower bounds identify the saturation rate
\(M^{-[r+(d+1)/2]/d}\)
\cite{LiuMaoXuIntegral,MaoXuSaturation}.
Although smooth sigmoidal activations such as \(\tanh\) have been highly
effective in numerical practice, the existing literature does not provide a
corresponding theory establishing the full regularity-dependent Sobolev
approximation hierarchy for independently sampled features with such
activations in general dimension.

We construct shallow-network spaces for a class of smooth sigmoidal
activations, including \(\tanh\), the
logistic sigmoid, and \(\erf\).  With \(M\) deterministically prescribed
features, every \(u\in H^k(\Omega)\) admits an approximant
\(v_M^{\rm det}\) satisfying, simultaneously for \(m=0,\ldots,k\),
\[
 \|u-v_M^{\rm det}\|_{H^m(\Omega)}
 \lesssim M^{-(k-m)/d}\|u\|_{H^k(\Omega)}.
\]
When the hidden parameters are instead sampled independently from a
prescribed distribution whose density is bounded away from zero, we
prove that, with probability at least \(1-\delta\), every
\(u\in H^k(\Omega)\) admits an approximant satisfying
\[
 \|u-v_M^{\rm ran}\|_{H^m(\Omega)}
 \lesssim
 \left(\frac{M}{\log(M/\delta)}\right)^{-(k-m)/d}
 \|u\|_{H^k(\Omega)},\qquad m=0,\ldots,k.
\]
Hence, i.i.d.\ smooth sigmoidal features preserve the full
regularity-dependent Sobolev approximation rate, up to logarithmic
oversampling.

In addition to the rate estimates, the smooth sigmoidal construction adapts
to any prescribed finite target regularity and yields the corresponding
hierarchy of \(H^m(\Omega)\) estimates, without the loss of conformity that
generally occurs for \(\operatorname{ReLU}^r\) trial spaces when \(m>r\). The
analysis also identifies an activation-dependent hidden-parameter scale
\(\sigma_N\), including an essential logarithmic correction to \(N^{-1}\).
This scale prescribes the magnitude \(\sigma_N^{-1}\) of the hidden weights
and biases sufficient to attain a prescribed Sobolev approximation order,
providing a theoretically motivated reference scale for neural-network 
initialization. The deterministic order matches the
Kolmogorov-width lower bound for Sobolev balls and is therefore optimal among
linear spaces of the same dimension.  The random construction reaches the
same benchmark up to a logarithmic factor.
In both constructions, the output coefficients can be chosen to grow at most
polynomially with the resolution.

To complement the theoretical results, we conduct numerical experiments for
both deterministic and random feature spaces across several regimes. These
experiments cover dimensions \(2\), \(3\), and \(10\), error norms \(L^2\),
\(H^1\), and \(H^2\), and target regularities \(H^k\) for
\(k=2,\ldots,6\), with additional \(H^{10}\) and \(H^{20}\) targets in
\(d=10\). Across these settings, the log--log error curves exhibit algebraic
slopes that closely match the predicted orders.  To the best of our knowledge,
these are the first such rate-resolving experiments for standard smooth
sigmoidal features covering a broad range of target regularities in both
low- and high-dimensional settings.

Section~2 states the main deterministic and random approximation theorems
and summarizes the numerical evidence.  Section~3 presents concluding remarks,
including a discussion of implications and extensions.
Section~4 contains the complete analysis, from the sigmoidal integral
representation and activation estimates through deterministic cubature and
the random matrix argument.  The numerical settings are collected in
Appendix~A.

\section{Main results}
\label{sec:main-results}

This section presents the main approximation results for deterministic and
independently sampled feature dictionaries. After introducing the common
setting, we state the corresponding Sobolev error and coefficient bounds,
establish the optimality of the deterministic rate through a Kolmogorov-width
lower bound, and present numerical evidence for the predicted dependence on
target regularity, error norm, and spatial dimension.

\subsection{Setting}

Fix integers \(d\geq2\), \(k\geq1\), a bounded Lipschitz domain
\(\Omega\Subset\R^d\), and a fixed offset bound
\(B>\sup_{x\in\Omega}|x|\).
Let \(\phi\) be a smooth sigmoidal activation and, for each integer \(N\geq2\), let \(\sigma_N>0\) denote the associated inner-parameter
scale.  Consider
\begin{equation}
 v_N(x)=a_{0,N}+\sum_{\nu=1}^{M_{\omega,N}}
       \sum_{\ell=-L_N}^{L_N}a_{\nu\ell,N}
 \phi\!\left(\frac{\omega_{\nu,N}\cdot x-b_{\ell,N}}{\sigma_N}\right),
 \qquad \omega_{\nu,N}\in\Sph^{d-1},
 \tag{2.1}\label{eq:det-network}
\end{equation}
Here \(M_{\omega,N}\) and \(2L_N+1\) count the directions and offsets,
respectively, \(b_{\ell,N}\in[-B,B]\), and
\(a_{0,N},a_{\nu\ell,N}\in\R\).
The relevant activations are bounded smooth sigmoids with
exponentially decaying derivative profiles, such as \(\tanh\), the logistic
sigmoid, and \(\erf\).  Throughout this section, the activation \(\phi\) and
the scale \(\sigma_N\) are assumed to satisfy the precise hypotheses
stated and verified in Subsection~\ref{sec:activation-setting}.

\begin{remark}
Throughout the paper, \(C,c>0\) denote generic constants that may vary
from line to line and depend only on the fixed problem and construction data
(including the positive lower density bound in random statements), but are
independent of the target function, resolution, feature count, confidence
level, and samples.
\end{remark}

\subsection{Deterministic approximation}

\begin{theorem}[Deterministic sigmoidal-network approximation]
\label{thm:main}
Suppose that the activation and inner-parameter scale hypotheses of
Subsection~\ref{sec:activation-setting} hold.
Then there exist $C,N_0>0$ such that, for every $N\geq N_0$, a network of the form
\eqref{eq:det-network} can be chosen with equal-weight spherical-design
directions, uniformly spaced offsets in a fixed bounded interval, and
\(M_{\omega,N}\leq C N^{d-1}\), \(2L_N+1\leq C N\).  Write
\(M_N:=M_{\omega,N}(2L_N+1)\leq C N^d\) for its number of nonconstant
activations.  Let \(V_N\) be the activation-spanned space associated with
\eqref{eq:det-network}.  These choices
define a linear approximation operator
\(\mathcal T_N^{\mathrm{det}}:H^k(\Omega)\to V_N\).  For every
$u\in H^k(\Omega)$, the network \(v_N=\mathcal T_N^{\mathrm{det}}u\)
satisfies, simultaneously for $m=0,\ldots,k$,
\begin{equation}
 \norm{u-v_N}_{H^m(\Omega)}
 \leq C N^{m-k}\norm u_{H^k(\Omega)}
 \leq C M_N^{-(k-m)/d}\norm u_{H^k(\Omega)},
 \tag{2.2}\label{eq:det-rate}
\end{equation}
For some finite activation-dependent exponent \(\Gamma_{\phi,d,k}\), the
coefficients also satisfy the non-sharp polynomial bound
\begin{equation}
 \norm{(a_{\nu\ell,N})}_{\ell^\infty}
 \leq C N^{\Gamma_{\phi,d,k}}\norm u_{H^k(\Omega)}.
 \tag{2.3}\label{eq:det-coeff-bound}
\end{equation}
\end{theorem}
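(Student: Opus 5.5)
We will prove Theorem~\ref{thm:main} in four steps. First we reduce to a smooth, band-limited target. Then we represent that target exactly as a sigmoidal ridge integral over direction--offset space. Next we discretize the integral by cubature. Finally we control the errors and the coefficients.

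\emph{Step 1 (extension and spectral truncation).} Let \(E:H^k(\Omega)\to H^k(\R^d)\) be a Stein extension operator. Choose a fixed cutoff \(\chi\in C_c^\infty\) with \(\chi=1\) on \(\Omega\) and \(\supp\chi\subset\{|x|<B\}\). Set \(u_N:=P_N(\chi Eu)\), where \(P_N\) is a smooth Fourier multiplier localizing to \(|\xi|\le N\). Standard Fourier estimates give
\[
\|u-u_N\|_{H^m(\Omega)}\le CN^{m-k}\|u\|_{H^k(\Omega)},\qquad \|u_N\|_{H^{s}(\R^d)}\le CN^{s-k}\|u\|_{H^k(\Omega)}\ (s\ge k).
\]
All subsequent steps are linear in \(u\), so the operator \(\mathcal T_N^{\rm det}\) is linear.

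\emph{Step 2 (sigmoidal ridge representation).} We use the Radon/ridgelet inversion. Write \(u_N(x)=\int_{\Sph^{d-1}}\int_\R g_N(\omega,t)\,\delta(\omega\cdot x-t)\,dt\,d\omega\), where \(g_N\) is a filtered (\(\Lambda^{d-1}\)) Radon transform of \(u_N\). Since \(\chi Eu\) is compactly supported and \(P_N\) has a Schwartz kernel, \(g_N\) is concentrated on \(|t|\le B\) up to tails that decay rapidly in \(N\).

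We then replace \(\delta\) by \(\sigma_N^{-1}\phi'(\cdot/\sigma_N)\). The error can be read off in frequency: it is the multiplier \(1-\widehat{\phi'}(\sigma_N\tau)\) acting on frequencies \(|\tau|\le N\). Here we use the activation hypotheses. The derivative \(\phi'\) is Schwartz and its Fourier transform is analytic near 0 (e.g.\ \(\widehat{\phi'}(\tau)=\pi\tau/\sinh(\pi\tau/2)\) for \(\tanh\)). This leads to a corrected kernel: a finite linear combination \(\psi=\sum_j c_j\sigma^{-1}\phi'((\cdot-s_j\sigma)/\sigma)\), or equivalently an order-\(k\) moment correction. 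It satisfies \(|1-\hat\psi(\sigma\tau)|\lesssim(\sigma|\tau|)^{2k}\) near zero. With \(\sigma_N\approx (N\log N)^{-1}\), or whatever scale the hypotheses of Subsection~\ref{sec:activation-setting} prescribe, the truncation error is \(O(N^{m-k})\) in \(H^m\).

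Integrating by parts in \(t\) converts \(\phi'\) into \(\phi\). This yields
\[
u_N\approx a_0+\int_{\Sph^{d-1}}\int_{-B}^{B}h_N(\omega,b)\,\phi\!\left(\frac{\omega\cdot x-b}{\sigma_N}\right)db\,d\omega .
\]
The constant \(a_0\) collects the boundary terms coming from \(\phi(\pm\infty)\), and \(h_N=-\sigma_N\partial_b g_N\), suitably corrected.

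\emph{Step 3 (cubature).} We discretize \(\omega\) by an equal-weight spherical design of strength \(\sim N\), which requires \(M_{\omega,N}\le CN^{d-1}\) points. We discretize \(b\) by the trapezoid rule with \(2L_N+1\le CN\) nodes on \([-B,B]\).

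\emph{Main obstacle.} The integrand \(\phi((\omega\cdot x-b)/\sigma_N)\) varies on scale \(\sigma_N\ll N^{-1}\). Exactness for polynomials of degree \(\sim N\) in \(\omega\) is therefore not obviously sufficient. We will not approximate the integrand directly. Instead we exploit the band-limitation of \(h_N\):
\begin{itemize}
\item In \(\omega\), \(h_N(\omega,\cdot)\) is essentially a spherical polynomial of degree \(\lesssim N\). Expanding \(\phi((\omega\cdot x-b)/\sigma)\) in spherical harmonics, the Funk--Hecke coefficients of degree \(n\) are controlled by \(\widehat{\phi'}\) at frequency \(n\sigma\)-scaled arguments.
\item Combined with the Schwartz/exponential decay hypothesis, the aliasing terms are at most \(e^{-cN\cdot N\sigma_N}\)-type.
\end{itemize}
This is exactly what the logarithmic correction in \(\sigma_N\) must make \(O(N^{-K})\) for \(K\) large. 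The offset trapezoid rule is handled the same way through Poisson summation, because \(b\mapsto h_N\) is band-limited to \(|\tau|\lesssim N\). If this balance fails, we will try the fallback of a design of strength \(CN\log N\), absorbed into constants only if the hypotheses allow it.

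\emph{Step 4 (coefficients).} The coefficients are \(a_{\nu\ell,N}=w_\nu\Delta b\,h_N(\omega_\nu,b_\ell)\). Bernstein-type bounds give \(\|h_N\|_\infty\le C\sigma_N N^{d+1}\|u_N\|_{L^2}\). The moment-correction constants contribute at most \(\sigma_N^{-O(k)}\). Together these give \eqref{eq:det-coeff-bound} with some finite \(\Gamma_{\phi,d,k}\). Combining Steps 1--3 gives \eqref{eq:det-rate}, and \(M_N\le CN^d\) converts \(N^{m-k}\) into \(M_N^{-(k-m)/d}\).
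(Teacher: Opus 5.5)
There is a genuine gap: Steps~2 and~3 need incompatible inner scales. Step~2 on its own works at your scale $\sigma_N\approx(N\log N)^{-1}$, but essentially only when $N\sigma_N=O(1)$. A fixed moment correction $\widehat\psi$ is close to $1$ only for bounded $\sigma_N\tau$. At the scale actually prescribed in Subsection~\ref{sec:activation-setting}, $\sigma_N\asymp\log N/N$ (resp.\ $\sqrt{\log N}/N$), one has $|\widehat G(2N\sigma_N)|\le CN^{-c}$ at the band edge. So $1-\widehat\psi(\sigma_N\tau)$ is of order one on the upper part of the band, unless the correction grows polynomially in $N$, i.e.\ unless it becomes an in-band deconvolution.

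Conversely, Step~3 fails when $N\sigma_N\to0$. With $2L_N+1\le CN$ offsets, Poisson summation puts the aliases at $\tau+2\pi m/h_N$ with $2\pi/h_N\asymp N$. The kernel factor there is $\widehat G$ evaluated at arguments of size $N\sigma_N\to0$, which is essentially $\widehat G(0)$. So there is no suppression: the discrete offset sum is a staircase in $t$, and its $t$-derivatives are spike trains of width $\sigma_N$. Likewise, for $\tanh$ the zonal function $z\mapsto\phi((rz-b)/\sigma_N)$ has poles at distance $\asymp\sigma_N/r$ from $[-1,1]$. Its best approximation by spherical polynomials of degree $CN$ is therefore no better than about $e^{-cN\sigma_N}$ (for $\erf$, about $e^{-c(N\sigma_N)^2}$).

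Your claimed bound $e^{-cN\cdot N\sigma_N}$ has one factor of $N$ too many. Evaluated correctly, your own Funk--Hecke heuristic shows that degree $CN$ suffices only if $N\sigma_N\gtrsim\log N$ (resp.\ $\sqrt{\log N}$). The fallback of strength $CN\log N$ violates $M_{\omega,N}\le CN^{d-1}$ and leaves the offset aliasing untouched, so it cannot give the stated rate in $M_N$.

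The missing idea is \emph{exact} deconvolution on the compact band instead of an approximate identity. Since $\widehat g_N(\omega,\cdot)$ is supported in $[-2N,2N]$ and $\widehat G$ has no real zeros, the paper sets $\widehat q_N=\widehat G(\sigma_N\tau)^{-1}\widehat g_N$ and $a_N=\partial_tq_N$. This gives the exact representation (4.4) with the centered kernel $K^\phi_{\sigma_N}$ for every $\sigma_N$, so the ridge step costs no approximation error. That frees $\sigma_N$ to be as large as $A_\phi\log N/N$ (resp.\ $\sqrt{A_\phi\log N}/N$). The price is the factor $D_N^\phi\le CN^{P_{\rm inv}}$, which enters only the coefficients.

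Meanwhile, for $|\tau|\le2N$ the offset aliases satisfy $|\tau+2\pi m/h_N|\ge2N(\pi|m|/\kappa-1)$, a fixed factor $c_\kappa=\pi/\kappa-1>1$ beyond the band edge. The same logarithmic scale therefore yields the net gains $N^{-A_\phi(c_\kappa^2-1)}$ (resp.\ $N^{-\pi A_\phi(\pi/\kappa-2)}$) behind (4.16) and (4.18). It also makes the Bernstein-ellipse angular bounds of Lemma~\ref{lem:standard-sigmoids} reach $N^{-Q}$ at degree $C_{\rm ang}N$. Taking $A_\phi$ and the bandwidth constants large beats $D_N^\phi$ and every other polynomial loss.

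This balance between a polynomially bounded in-band inverse and superpolynomial out-of-band decay is what lets $O(N)$ uniform offsets and a design of strength $O(N)$ achieve $N^{-Q}$ cubature error. It is also the source of the exponent in \eqref{eq:det-coeff-bound}. A smaller point: for even $d$, $g_N$ is not concentrated on $|t|\le B$, because the multiplier $|\tau|^{d-1}$ is not smooth at $0$ and produces $|t|^{-d}$ tails. The paper instead truncates offsets through the exponential $b$-decay of the centered kernel for $|b|>B_0$, not through localization of the density.
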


\subsection{Random features and optimality}

The random theorem replaces the deterministic parameter rule by independent
sampling over the full direction--offset space.

Let ${\cal C}=\Sph^{d-1}\times[-B,B]$, equipped with the product measure
$\mu$ of surface and Lebesgue measure.  For $p=(\omega,b)\in{\cal C}$, set
\(\phi_{N,p}(x):=\phi((\omega\cdot x-b)/\sigma_N)\).  For
\(M\geq1\) and \(X_M=\{p_1,\ldots,p_M\}\subset{\cal C}\), define the
activation-spanned space generated by the random parameter set \(X_M\) as
\(V_{M,N}(X_M):=\operatorname{span}\{1,\phi_{N,p_j}:1\leq j\leq M\}\).

\begin{theorem}[Random sigmoidal-feature approximation]
\label{thm:random}
Suppose that the activation and inner-parameter scale hypotheses of
Subsection~\ref{sec:activation-setting} hold, and let
$p_j=(\omega_j,b_j)$ be independent samples on ${\cal C}$ with density
$\rho$ relative to $\mu$, where \(\rho(p)\geq\rho_0>0\) for
$\mu$-almost every $p$.
There are constants $C,c,N_0$, depending only on the fixed data
and \(\rho_0\), such that, for every integer \(N\geq N_0\) and every
\(0<\delta<1\), if \(M\geq C N^d\log(N/\delta)\), then there is a
sample-dependent linear approximation operator
\(\mathcal T_{N,X_M}:H^k(\Omega)\to V_{M,N}(X_M)\), defined to be zero
whenever the stable-sampling event fails, such that, with
\begin{equation}
 \mathcal E_{N,M}:=\left\{
 \max_{0\leq m\leq k}N^{k-m}
 \sup_{0\ne u\in H^k(\Omega)}
 \frac{\|u-\mathcal T_{N,X_M}u\|_{H^m(\Omega)}}
      {\|u\|_{H^k(\Omega)}}
 \leq C\right\},
 \qquad \mathbb P(\mathcal E_{N,M})\geq1-\delta.
 \tag{2.4}\label{eq:random-N-rate}
\end{equation}
Consequently, provided $M\geq C N_0^d\log(N_0/\delta)$, if
$N=N(M,\delta)\geq N_0$ is chosen as the largest integer satisfying
$M\geq C N^d\log(N/\delta)$, then on \(\mathcal E_{N,M}\),
\(v=\mathcal T_{N,X_M}u\) satisfies
\begin{equation}
 \norm{u-v}_{H^m(\Omega)}
 \leq
 C\left(\frac{M}{\log(M/\delta)}\right)^{-(k-m)/d}
 \norm u_{H^k(\Omega)},\qquad m=0,\ldots,k.
 \tag{2.5}\label{eq:random-width-rate}
\end{equation}
On this event, writing \(v=a_0+\sum_{j=1}^Ma_j\phi_{N,p_j}\), the
coefficients depend linearly on \(u\) and, for some finite
activation-dependent exponent
\(\gamma_{\phi,d,k}\), satisfy
\begin{equation}
 |a_0|+\sum_{j=1}^M|a_j|
 \leq C N^{\gamma_{\phi,d,k}}\|u\|_{H^k(\Omega)}.
 \tag{2.6}\label{eq:random-coeff-bound}
\end{equation}
More precisely, before choosing $M$ in terms of $\delta$, the event can be
chosen so that
\(
 \mathbb P(\mathcal E_{N,M}^{\,c})
 \leq CN^d e^{-cM/N^d}.
\)
\end{theorem}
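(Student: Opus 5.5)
We take as given the deterministic construction behind Theorem~\ref{thm:main}. Its approximant can be written as a cubature of a sigmoidal ridge representation over \({\cal C}\):
\[
 \mathcal T_N^{\mathrm{det}}u = a_0(u)+\int_{\cal C} g_N(u;p)\,\phi_{N,p}\,d\mu(p),
\]
discretized by a rule with \(O(N^d)\) nodes. The density \(g_N(u;\cdot)\) depends linearly on \(u\), is band-limited in direction and offset at resolution \(N\), and has polynomially bounded size. The random proof replaces this cubature by a weighted least-squares (or Gram-inverse) quadrature built from the samples \(p_j\). The plan has three steps.

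\textbf{Step 1: reduction to a finite-dimensional space.} Fix a finite-dimensional space \(\mathcal P_N\) of functions on \({\cal C}\) of dimension \(D_N\le CN^d\). A natural choice is spherical harmonics of degree \(\le CN\) tensored with polynomials or trigonometric functions in \(b\) of degree \(\le CN\). It must have three properties.
\begin{enumerate}
\item The exact integral representation, with density projected onto \(\mathcal P_N\), still gives \(\|u-\tilde v_N\|_{H^m}\le CN^{m-k}\|u\|_{H^k}\). This should follow from the proof of Theorem~\ref{thm:main}, since the deterministic cubature is exact on such spaces.
\item The map \(p\mapsto\phi_{N,p}\), viewed as an \(H^k(\Omega)\)-valued function, is approximated in \(\mathcal P_N\otimes H^k(\Omega)\) to accuracy \(N^{-K}\) for arbitrary \(K\). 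This is where the Schwartz-class derivative decay and the logarithmic correction in \(\sigma_N\) are used: \(\phi_{N,p}\) is smooth in \(p\) at scale \(\sigma_N\approx N^{-1}\log N\), so its spectral tail beyond degree \(CN\) decays superpolynomially.
\item \(\mathcal P_N\) has a Christoffel function bound \(\sup_p\sum_i|e_i(p)|^2\le CN^d\) for an \(L^2(\rho\,d\mu)\)-orthonormal basis \(e_i\). This uses \(\rho\ge\rho_0\), since \(L^2(\rho)\)-orthonormality is then controlled by the \(L^2(\mu)\) basis.
\end{enumerate}

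\textbf{Step 2: stable sampling event.} Form the empirical Gram matrix \(G=\frac1M\sum_j e(p_j)e(p_j)^\top\), where \(e=(e_i)_{i\le D_N}\). Since \(\mathbb E G=I\) and \(\|e(p)\|^2\le CN^d\), the matrix Chernoff bound gives
\[
 \mathbb P\bigl(\|G-I\|>1/2\bigr)\le 2D_N e^{-cM/N^d}\le CN^de^{-cM/N^d}.
\]
This is exactly the claimed tail, and it is at most \(\delta\) once \(M\ge CN^d\log(N/\delta)\). On this event we define quadrature weights
\[
 w_j=\frac1M\,e(p_j)^\top G^{-1}\int_{\cal C}e\,\rho\,d\mu .
\]
These are exact for all \(f\rho\) with \(f\in\mathcal P_N\), and satisfy \(\sum_j|w_j|\le C\) and \(\max_j|w_j|\le CN^{d/2}/\sqrt M\) type bounds. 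To keep exactness for the density \(g_N\), apply the rule to \(g_N/\rho\) after first projecting it. It is cleaner to take the least-squares projection of \(g_N/\rho\) in \(L^2(\rho)\) onto \(\mathcal P_N\). Off the event, set \(\mathcal T_{N,X_M}=0\).

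\textbf{Step 3: error and coefficients.} Set \(\mathcal T_{N,X_M}u=a_0+\sum_j w_j\,(\Pi g_N)(u;p_j)\,\phi_{N,p_j}\). The error splits into three parts.
\begin{itemize}
\item The projection error from Step 1(1), which is of size \(N^{m-k}\|u\|_{H^k}\).
\item The quadrature error on the tail of \(p\mapsto\phi_{N,p}\). This is bounded by \(\sum_j|w_j\,g(p_j)|\) times the \(N^{-K}\) tail from Step 1(2), which is negligible once \(K\) exceeds the polynomial growth of \(g_N\).
\item Exactness on the remaining polynomial part, which contributes no error.
\end{itemize}
The coefficient bound (2.6) follows from \(\sum|w_j|\le C\) together with \(\sup|\Pi g_N|\le CN^{d/2}\|g_N\|_{L^2}\le CN^{\gamma}\|u\|_{H^k}\). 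Linearity in \(u\) is clear. For (2.5), substitute the largest admissible \(N\); then \(N^d\asymp M/\log(N/\delta)\) and \(\log(N/\delta)\asymp\log(M/\delta)\).

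The main obstacle is Step 1(2). It requires an approximation of the parametrized ridge function \(p\mapsto\phi((\omega\cdot x-b)/\sigma_N)\) by degree-\(CN\) functions of \(p\), uniformly in \(x\) together with its derivatives up to order \(k\). The approximation must also be superpolynomially accurate with degree only \(O(N)\) rather than \(O(N\log N)\), so that \(D_N\le CN^d\). We expect the \(\log\)-correction in \(\sigma_N\) to be exactly calibrated to absorb this. We would try to extract it from the same activation estimates used in the deterministic cubature proof.
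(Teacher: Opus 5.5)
Your architecture is the right one: matrix Chernoff stability on a space of dimension $O(N^d)$ with a bounded Christoffel function, Gram-inverse weights with $\sum_j|w_j|\le C$, and superalgebraic approximation of the feature map. However, Step~1 cannot be realized as stated. For the \emph{uncentered} features $\phi_{N,p}$, requirements~(2) and~(3) of your Step~1 are incompatible for both options you list.

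With trigonometric polynomials of period $2B$ in $b$, (3) holds. The Christoffel function is the constant $D_N/Z$, so it is at most $D_N/(Z\rho_0)$ for the $\rho$-weighted basis. But every $V\in\mathcal P_N\otimes H^k(\Omega)$ satisfies $V(\omega,-B)=V(\omega,B)$. Meanwhile $|\omega\cdot x|\le R<B$ forces $\phi_{N,(\omega,-B)}\approx\phi(+\infty)$ and $\phi_{N,(\omega,B)}\approx\phi(-\infty)$ on $\Omega$, which are two different constants. Hence $\sup_p\|\phi_{N,p}-V(p)\|_{L^2(\Omega)}\ge\tfrac12|\phi(+\infty)-\phi(-\infty)|\,|\Omega|^{1/2}-o(1)$ at every degree, so (2) fails. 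Your heuristic ``smooth in $p$ at scale $\sigma_N$'' does not see this jump of the periodic extension at $b=\pm B$.

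With algebraic polynomials in $b$, (2) is plausible, by analyticity in a strip of width $\sim\sigma_N$, but (3) fails. The orthonormal Legendre Christoffel function on $[-B,B]$ equals $(L+1)^2/(2B)$ at $b=\pm B$. So already for uniform $\rho$ one has $\sup_p\sum_i|e_i(p)|^2\asymp N^{d+1}$. The Chernoff argument then needs $M\gtrsim N^{d+1}\log(N/\delta)$ and yields only the exponent $(k-m)/(d+1)$ in (2.5).

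The missing idea is the centering in (4.4). Write $\phi_{N,p}(x)=K^\phi_{\sigma_N}(\omega\cdot x,b)+\phi(-b/\sigma_N)$, absorb the $x$-independent part into the output constant, and discretize only $K^\phi_{\sigma_N}$; the paper sets $a_0=f_N(0)-\sum_ja_j\phi(-b_j/\sigma_N)$. Because $R<B_0<B$, $K^\phi_{\sigma_N}$ and its $t$-derivatives are superalgebraically small for $|b|>B_0$; this is the tail quantity (4.9). You may therefore insert a smooth $2B$-periodic cutoff $\psi$ equal to one near $[-B_0,B_0]$ and discard the $(1-\psi)$ part as a physical tail. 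Approximation in $\mathcal V_{CN}\otimes H^k(\Omega)$ then does hold, via the offset Fourier decay $\widehat G(\sigma_N\eta)$ behind (4.10) and the Bernstein-ellipse estimate behind (4.11).

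With this repair your route is a legitimate variant of the paper's. The paper approximates the whole product $\psi a_NK^\phi_{\sigma_N}$ in $\mathcal V_{C_*N}\otimes H^k(\Omega)$ (Lemma~\ref{lem:activation-bandwidth}), which also requires the angular and offset bandwidth of $a_N$. It uses weights exact for $\mu$-integration, built from a $\mu$-orthonormal basis scaled by $1/\sqrt{M\rho(p_j)}$, so $1/\rho$ never enters an integrand. Your $L^2(\rho)$-projection $h$ of $g_N/\rho$ needs only the kernel to be approximable. The price is that the weights must then be exact on the products $h\,\phi^\sharp$, i.e.\ on $\mathcal P_{2CN}$ rather than $\mathcal P_{CN}$. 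This is harmless, but it is not what your Step~2 provides. Finally, requirement~(1) is not separate: it follows from (2), because the projection error is orthogonal to every $\phi^\sharp\in\mathcal P_N\otimes H^k(\Omega)$.
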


The following width estimate shows that the approximation order obtained
above is sharp among all finite-dimensional linear
spaces.

\begin{proposition}
\label{prop:linear-width-optimality}
Let $0\leq m<k$ be integers and let
\({\cal B}_k:=\{u\in H^k(\Omega):\|u\|_{H^k(\Omega)}\leq1\}\).
For $M\geq1$, define the Kolmogorov width
\begin{equation}
 d_M({\cal B}_k;H^m(\Omega))
 =
 \inf_{\substack{W\subset H^m(\Omega)\\ \dim W\leq M}}
 \ \sup_{u\in{\cal B}_k}\ \inf_{w\in W}
 \|u-w\|_{H^m(\Omega)}.
 \tag{2.7}
\end{equation}
Classical \(n\)-width theory for Sobolev classes gives the lower bound of
optimal order: for some $c=c(d,k,m,\Omega)>0$ and all
sufficiently large $M$,
\begin{equation}
 d_M({\cal B}_k;H^m(\Omega))
 \geq cM^{-(k-m)/d}.
 \tag{2.8}
\end{equation}
See, for example, Pinkus~\cite[pp.~232--247]{PinkusWidths}.
Consequently, the rate in Theorem~\ref{thm:main} is sharp among all
finite-dimensional linear spaces.  Including the output constant changes
the dimension from $M$ to at most $M+1$ and does not affect the rate.
\end{proposition}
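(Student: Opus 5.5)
The plan is to prove the lower bound (2.8) directly, by a fooling-set (bump-function) argument. For an \(M\)-dimensional subspace \(W\) we exhibit an \((M+1)\)-dimensional subspace of \({\cal B}_k\) on which every element is far from \(W\). We then deduce the sharpness claims from (2.8).

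First, construct the bumps. Since \(\Omega\) is open, it contains a closed cube \(Q\) of side \(s>0\). Fix a nonzero \(\psi\in C_c^\infty((0,1)^d)\). For \(n\) with \(n^d\geq M+1\), say \(n=\lceil (M+1)^{1/d}\rceil\), subdivide \(Q\) into \(n^d\) congruent subcubes \(Q_j\) of side \(h=s/n\). Set \(\psi_j(x)=\psi((x-x_j)/h)\), where \(x_j\) is the corner of \(Q_j\). These functions have disjoint supports in \(\Omega\), and scaling gives
\[
\|\psi_j\|_{H^m}\asymp h^{-m+d/2}
\]
for \(h\leq1\), uniformly in \(j\). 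Indeed, the top-order seminorm dominates, and the lower bound uses \(|\psi|_{H^m}>0\). For \(c=\sum_j c_j\psi_j\), disjointness of supports gives
\[
\|c\|_{H^m}^2=\sum_j c_j^2\|\psi_j\|_{H^m}^2\geq C_1 h^{-2m+d}|c|^2,
\qquad
\|c\|_{H^k}^2\leq C_2 h^{-2k+d}|c|^2 .
\]

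Second, carry out the dimension count. Let \(X=\operatorname{span}\{\psi_1,\dots,\psi_{M+1}\}\), which has dimension \(M+1\), and let \(P\) be the \(H^m\)-orthogonal projection onto \(W\). The map \(P|_X:X\to W\) has a nontrivial kernel, since \(\dim W\leq M<M+1\). So there is \(0\neq f\in X\) with \(Pf=0\), that is, \(f\perp W\) in \(H^m\). Rescale \(f\) so that \(\|f\|_{H^k}=1\); then \(f\in{\cal B}_k\). For this \(f\),
\[
\inf_{w\in W}\|f-w\|_{H^m}=\|f\|_{H^m}\geq \sqrt{C_1/C_2}\,h^{k-m}\|f\|_{H^k}\gtrsim n^{-(k-m)}\gtrsim M^{-(k-m)/d}.
\]
Here we used \(h=s/n\) and \(n\leq 2(M+1)^{1/d}\leq 4M^{1/d}\). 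If \(W\) is only a subspace of \(H^m(\Omega)\) not closed in any extra sense, nothing changes, because \(W\) is finite-dimensional and hence closed. Taking the infimum over \(W\) yields (2.8). The restriction to large \(M\) is needed only to ensure \(h\leq1\) in the scaling comparison. Lipschitz regularity of \(\Omega\) is not needed for this direction.

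Third, deduce the consequences. The space \(V_N\) in Theorem~\ref{thm:main} is contained in a linear space of dimension at most \(M_N+1\). If a rate \(\varepsilon(M_N)=o(M_N^{-(k-m)/d})\) held uniformly over \({\cal B}_k\), we would get \(d_{M_N+1}({\cal B}_k;H^m(\Omega))=o(M_N^{-(k-m)/d})\). Since \((M_N+1)^{-(k-m)/d}\asymp M_N^{-(k-m)/d}\), this contradicts (2.8). The same argument applies to \(V_{M,N}(X_M)\).

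The main obstacle is only the norm-equivalence constants for the bumps. The upper \(H^k\) bound is routine. The lower \(H^m\) bound is trivial from the \(|\cdot|_{H^m}\) seminorm when \(m\geq1\), and from the \(L^2\) norm when \(m=0\). Alternatively, one can cite Pinkus's classical result as the paper does.
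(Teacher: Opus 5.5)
Your proof is correct. It differs from the paper mainly in that the paper gives no argument for (2.8) at all. The paper cites classical \(n\)-width theory (Pinkus) and then deduces sharpness from \(\dim V_N\le M_N+1\), which is exactly your third step.

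What you add is a self-contained proof of the lower bound. You build an \((M+1)\)-dimensional space \(X\) of disjointly supported rescaled bumps on which \(\|f\|_{H^m(\Omega)}\ge (|\psi|_{H^m}/\|\psi\|_{H^k})\,h^{k-m}\|f\|_{H^k(\Omega)}\). You then observe that the \(H^m\)-orthogonal projection onto any \(W\) with \(\dim W\le M\) must annihilate some \(0\ne f\in X\). In Hilbert space this is the elementary form of the Bernstein-width inequality \(d_M\ge b_M\); no Borsuk--Ulam-type argument is needed.

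The individual steps all check out:
\begin{enumerate}
\item Disjoint supports make the bumps mutually orthogonal in \(H^m\).
\item A nonzero compactly supported \(\psi\) has \(|\psi|_{H^m}>0\).
\item The scaling bounds hold for \(h\le1\).
\item \(n=\lceil (M+1)^{1/d}\rceil\le4M^{1/d}\).
\end{enumerate}

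Your route buys several things:
\begin{itemize}
\item It works for any bounded open \(\Omega\), with no Lipschitz condition or extension operator.
\item It yields the explicit constant \(c=(s/4)^{k-m}|\psi|_{H^m}/\|\psi\|_{H^k}\).
\item If you take the cube side \(s\le1\), it holds for every \(M\ge1\), so the ``sufficiently large \(M\)'' proviso can be dropped.
\item It avoids the unstated step of transferring classical statements, usually formulated on model domains or in \(L^q\) scales, to \(H^m(\Omega)\) on a general domain.
\end{itemize}

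The citation buys brevity and access to the broader classical theory, such as exact asymptotics and non-Hilbert \(L^p\)--\(L^q\) settings where the orthogonal-projection step is unavailable. None of that is needed for the present Hilbert-space statement.
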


\subsection{Numerical evidence}
\label{sec:numerics}

The experiments provide numerical evidence for the predicted Sobolev
approximation exponent \((k-m)/d\) and illustrate its dependence on the
target-regularity parameter \(k\), the Sobolev order \(m\) of the error norm,
and the spatial dimension \(d\).  The experiments cover deterministic and
random dictionaries and employ both \(\tanh\) and \(\erf\)
activations.  Following standard numerical testing practice, the targets are
generated as finite random Fourier series with Gaussian coefficients and
prescribed Sobolev spectral decay.  For the numerical implementation, the
output coefficients are obtained solely from function values by discrete
\(L^2\) least squares, rather than from the constructive coefficients used in
the analysis.  Full numerical settings are summarized in Appendix~A.

\begin{figure}[htbp]
\centering
\includegraphics[width=\textwidth]{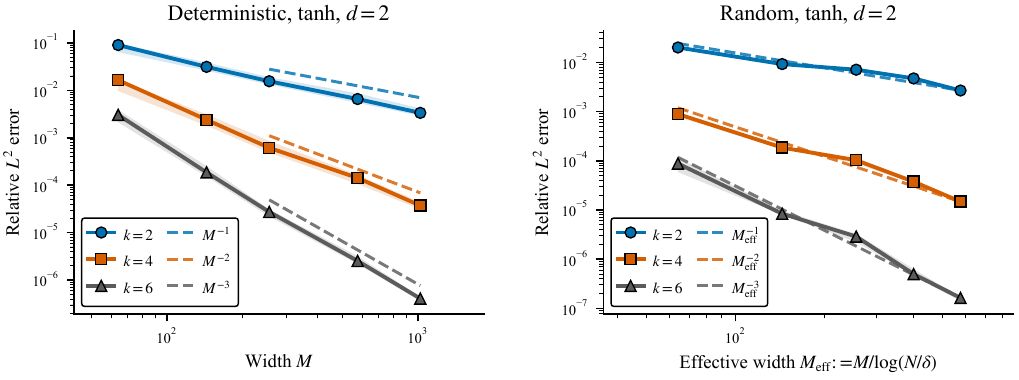}
\caption{Two-dimensional relative \(L^2\) errors for deterministic (left) and
random (right) \(\tanh\) dictionaries.  Curves and shaded regions
show medians and interquartile ranges over target realizations (left) or
dictionary realizations (right).  Dashed lines have order \(k/2\).}
\label{fig:num-2d}
\end{figure}
\FloatBarrier

The numerical results in Figure~\ref{fig:num-2d} clearly exhibit the
\(k/2\) convergence hierarchy predicted by the \(m=0\) cases of
Theorems~\ref{thm:main} and~\ref{thm:random} for both deterministic and
random dictionaries.  For the random case, width is measured by
\(M_{\mathrm{eff}}\) to account for logarithmic oversampling, thereby making
the theoretical convergence order directly visible.

\begin{figure}[htbp]
\centering
\includegraphics[width=\textwidth]{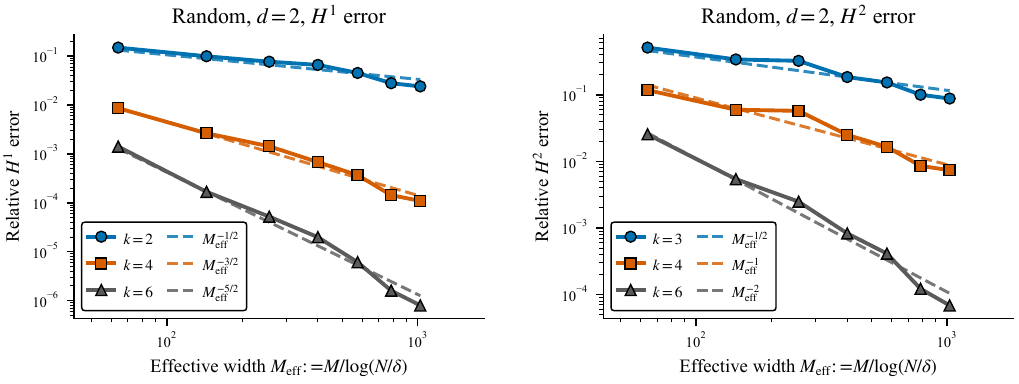}
\caption{Two-dimensional relative \(H^1\) (left) and \(H^2\) (right) errors
for random \(\erf\) dictionaries.  The coefficients are fitted only
in \(L^2\); dashed lines have order \((k-m)/2\).}
\label{fig:num-sobolev-2d}
\end{figure}
\FloatBarrier

The \(L^2\)-fitted approximants in Figure~\ref{fig:num-sobolev-2d}
display well-resolved \(H^1\) and \(H^2\) convergence orders
\((k-1)/2\) and \((k-2)/2\), respectively, without refitting in the
stronger norms, in agreement with the \(m=1,2\) cases of
Theorem~\ref{thm:random}.  In particular, the observed half-order loss
with each additional derivative directly resolves the dependence on \(m\)
predicted by \((k-m)/d\) in two dimensions.

Figure~\ref{fig:num-3d-10d} illustrates the role of dimension in the predicted
\(k/d\) rate through experiments with \(d=3\) and \(d=10\).  In both panels,
increasing \(k\) produces progressively steeper curves in accordance with the
regularity hierarchy.  The separation of the ten-dimensional curves shows that
the influence of \(k\) remains discernible even when the factor \(1/d\)
substantially moderates the convergence rate.

\begin{figure}[htbp]
\centering
\includegraphics[width=0.92\textwidth]{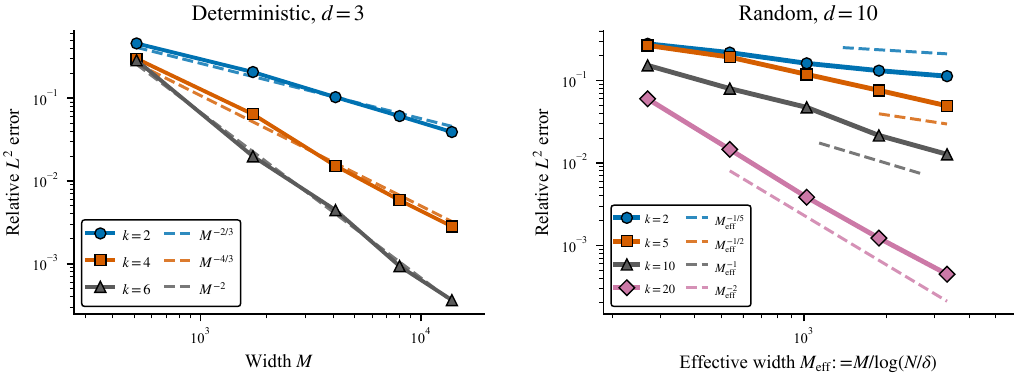}
\caption{Higher-dimensional relative \(L^2\) errors for deterministic \(\erf\)
dictionaries in \(d=3\) (left) and random \(\erf\) dictionaries in \(d=10\)
(right).  The right-panel curves and shaded regions are pointwise medians and
interquartile ranges over three target realizations on a common five-point
effective-width grid.  Dashed lines have order \(k/d\).}
\label{fig:num-3d-10d}
\end{figure}
\FloatBarrier

Across the three experiments, the parameters \(k\), \(m\), and \(d\) are
varied to isolate each dependence in the exponent \((k-m)/d\) predicted by
Theorems~\ref{thm:main} and~\ref{thm:random}.  The resulting convergence rates
for deterministic and random dictionaries closely track their theoretical
counterparts, providing consistent numerical evidence for the Sobolev
approximation hierarchy.

\section{Conclusion}
\label{sec:conclusion}

This work establishes regularity-dependent Sobolev approximation rates for
shallow networks generated by a fixed smooth sigmoidal activation.
Deterministic direction--offset dictionaries achieve the optimal Sobolev
order, while independent sampling from any parameter density bounded away
from zero preserves the same exponent with high probability, up to
logarithmic oversampling.  The numerical experiments also clearly recover
these rates and their predicted dependence on target
regularity, the Sobolev order of the error norm, and spatial dimension.

Natural extensions include broader regularity classes and adaptive feature
selection.  Extending the analysis beyond isotropic Sobolev spaces would
accommodate more general forms of regularity, while a connection with best
\(N\)-term approximation could allow the directions, offsets, and scales to
respond to the local structure of the target.  Such adaptivity is particularly
relevant for PDE solutions with interface or boundary-layer singularities, for
which uniform feature distributions may be inefficient.  Turning these
approximation results into convergence theory for PDE solvers will additionally
require stability, consistency, conditioning, and discretization estimates.

\section{Proofs and analysis}
\label{sec:proofs}

The proof begins with a bandlimited sigmoidal ridge representation and derives
the localization, quadrature, and parameter-space bandwidth estimates needed
for discrete approximation with polynomial coefficient control. Uniform offset
quadrature combined with spherical designs yields the deterministic
construction, while Gram-matrix concentration for i.i.d.\ samples in the joint
direction--offset space gives the random approximation result.

\subsection{Sigmoidal integral representation}
\label{sec:continuous-representation}

We first construct the continuous ridge representation shared by the
deterministic and random feature spaces, requiring only invertibility of the
activation derivative on the retained frequency band.

Fix \(d\geq2\), \(k\geq1\), and a bounded Lipschitz domain
\(\Omega\Subset\R^d\).  Put \(R:=\sup_{x\in\Omega}|x|\), fix
\(0<\kappa<\pi/2\), and choose fixed radii \(R<B_0<B\).

Let \(\phi:\R\to\R\) be a fixed smooth sigmoid, put
\(G=\phi'\in L^1(\R)\).
Fix an even \(\chi\in C_c^\infty(\R)\) with \(\chi=1\) on
\([-1,1]\) and \(\supp\chi\subset[-2,2]\).

\subsubsection{Fourier cutoff}

Choose $R_0>0$ so that $\overline\Omega\subset B_{R_0}$, and fix a
bounded extension/localization operator
\begin{equation}
 E:H^k(\Omega)\longrightarrow H^k(\R^d),
 \qquad \supp(Eu)\subset B_{R_0},
 \qquad \norm{Eu}_{H^k(\R^d)}\leq C\norm u_{H^k(\Omega)}.
 \tag{4.1}
\end{equation}
Such an operator is obtained by composing a standard Sobolev extension
with a fixed cutoff that equals one near $\overline\Omega$.  Put $f=Eu$.

Our Fourier convention is
\[
 \widehat f(\zeta):=\int_{\R^d}f(x)e^{-ix\cdot\zeta}\,dx,
 \qquad
 f(x)=(2\pi)^{-d}\int_{\R^d}\widehat f(\zeta)e^{ix\cdot\zeta}\,d\zeta.
\]
Define the radial low-pass function by
\(\widehat f_N(\zeta):=\chi(\abs\zeta/N)\widehat f(\zeta)\).

\begin{lemma}
\label{lem:cutoff}
For every \(N\geq1\) and every integer $m$ with $0\leq m\leq k$,
\begin{equation}
 \norm{f-f_N}_{H^m(\R^d)}
 \leq C N^{m-k}\norm f_{H^k(\R^d)}.
 \tag{4.2}
\end{equation}
\end{lemma}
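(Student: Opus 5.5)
The plan is to work entirely on the Fourier side, using Plancherel and the standard Fourier characterization of the Sobolev norm on \(\R^d\). With our convention, for every integer \(s\geq0\),
\[
 \norm g_{H^s(\R^d)}^2 \simeq (2\pi)^{-d}\int_{\R^d}(1+\abs\zeta^2)^{s}\,\abs{\widehat g(\zeta)}^2\,d\zeta ,
\]
and the equivalence constants depend only on \(s\) and \(d\). Since \(f\in H^k(\R^d)\subset L^2\), the function \(f_N\) is well defined through its Fourier transform \(\widehat f_N=\chi(\abs\zeta/N)\widehat f\), and
\[
 \widehat{f-f_N}(\zeta)=\bigl(1-\chi(\abs\zeta/N)\bigr)\widehat f(\zeta).
\]

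\emph{Step 1 (bounding the multiplier).} Because \(\chi=1\) on \([-1,1]\), the factor \(1-\chi(\abs\zeta/N)\) vanishes for \(\abs\zeta\leq N\). It is also bounded by the fixed constant \(1+\norm\chi_{L^\infty}\). So the whole error lives on the high-frequency region \(\abs\zeta>N\).

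\emph{Step 2 (trading weight for decay).} On \(\abs\zeta>N\ge1\) we have \(1+\abs\zeta^2\geq N^2\). Hence, for \(0\leq m\leq k\),
\[
 (1+\abs\zeta^2)^m=(1+\abs\zeta^2)^{k}(1+\abs\zeta^2)^{m-k}\leq N^{2(m-k)}(1+\abs\zeta^2)^k .
\]
Inserting this into the Fourier expression for \(\norm{f-f_N}_{H^m}^2\) gives
\[
 \norm{f-f_N}_{H^m(\R^d)}^2\leq C N^{2(m-k)}\int_{\abs\zeta>N}(1+\abs\zeta^2)^k\abs{\widehat f(\zeta)}^2\,d\zeta\leq CN^{2(m-k)}\norm f_{H^k(\R^d)}^2 .
\]
Taking square roots yields (4.2), with a constant depending only on \(d\), \(k\) and \(\chi\).

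There is no real obstacle here. The only points to check are that \(m\leq k\) makes the exponent \(m-k\) nonpositive, so the inequality on \(\abs\zeta>N\) goes the right way, and that \(\chi\) is bounded. The latter matters because \(\chi\) need not take values in \([0,1]\) unless chosen so, and boundedness is what gives the uniform constant.
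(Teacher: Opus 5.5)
Your proposal is correct and follows the paper's proof: both use Plancherel, the fact that $1-\chi(\abs\zeta/N)$ is bounded and vanishes for $\abs\zeta\leq N$, and the bound $(1+\abs\zeta^2)^{m-k}\leq N^{2(m-k)}$ on $\abs\zeta>N$. The paper compresses this into a single displayed estimate; you write out the same steps.
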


\begin{proof}
Since $1-\chi(\abs\zeta/N)$ is bounded and vanishes for
$\abs\zeta\leq N$, Plancherel gives
\[
 \norm{f-f_N}_{H^m}^2
 \leq C N^{-2(k-m)}
       \int_{\abs\zeta>N}(1+\abs\zeta^2)^k
       \abs{\widehat f(\zeta)}^2\,d\zeta
 \leq C N^{-2(k-m)}\norm f_{H^k}^2.
\]
\end{proof}

\subsubsection{Fourier-slice coefficient density}

Let \(\mathcal R f(\omega,t):=\int_{x\cdot\omega=t}f(x)\,dS_x\) denote the Radon
transform over affine hyperplanes.  With the convention above, the
Fourier-slice identity reads
\(\mathcal F_t\mathcal R f(\omega,\tau)=\widehat f(\tau\omega)\).
Let $\Lambda^{d-1}$ denote the $t$-Fourier multiplier
$\abs\tau^{d-1}$, and set \(g_N:=\Lambda^{d-1}\mathcal R f_N\).
Set \(c_d=[2(2\pi)^{d-1}]^{-1}\), the full-sphere inversion constant
for our Fourier convention.
The standard Radon slice and inversion identities~\cite{Radon} give
\begin{equation}
 \begin{aligned}
 \widehat g_N(\omega,\tau)
 &=\abs\tau^{d-1}\widehat f_N(\tau\omega)
 =\abs\tau^{d-1}\chi(\abs\tau/N)\widehat f(\tau\omega),\\
 f_N(x)&=c_d\int_{\Sph^{d-1}}g_N(\omega,\omega\cdot x)\,d\omega.
 \end{aligned}
 \tag{4.3}
\end{equation}
In particular, \(g_N\) is exactly bandlimited in the ridge variable, with
\(\supp\widehat g_N(\omega,\cdot)\subset[-2N,2N]\).

\subsubsection{Deconvolution and synthesis}

Choose \(\sigma_N>0\) so that
\(\widehat G(\sigma_N\tau)\ne0\) on \(|\tau|\leq2N\), and define
\[
 \widehat q_N(\omega,\tau)
 :=\widehat G(\sigma_N\tau)^{-1}\widehat g_N(\omega,\tau),
 \qquad a_N:=\partial_tq_N.
\]
Define the classical entire function \(\Phi_1(z):=(e^z-1)/z\), with
\(\Phi_1(0)=1\), and the centered kernel
\(K_\sigma^\phi(t,b):=\phi((t-b)/\sigma)-\phi(-b/\sigma)\).
Then \((1-e^{-it\eta})/(i\eta)=t\Phi_1(-it\eta)\), including at
\(\eta=0\) by continuity.
The offset Fourier transform of the centered kernel is
\[
 \widehat K_\sigma^\phi(t,\eta)
 =t\Phi_1(-it\eta)\widehat G(-\sigma\eta).
\]
Since \(\widehat q_N\widehat G(\sigma_N\cdot)=\widehat g_N\), Fourier
inversion in the ridge variable followed by (4.3) gives
\begin{equation}
 f_N(x)-f_N(0)
 =c_d\int_{\Sph^{d-1}}\int_\R
 a_N(\omega,b)K_{\sigma_N}^\phi(\omega\cdot x,b)
 \,db\,d\omega.                                           \tag{4.4}
\end{equation}
Thus, with \(\mathcal A_N^\phi u:=a_N\), (4.4) gives the continuous
sigmoidal ridge representation used throughout the construction.

The centering is essential for the proof.  An uncentered sigmoid does not decay as
\(|b|\to\infty\), whereas the subtracted term in
\(K_{\sigma_N}^\phi\) is independent of \(x\) and can be absorbed into
the network output constant.  Subsection~\ref{sec:activation-setting}
specifies when (4.4) can be
truncated and discretized with superalgebraically small error.

\subsection{Standard sigmoid profiles and parameter localization}
\label{sec:activation-setting}

The representation in Subsection~\ref{sec:continuous-representation}
requires only that the deconvolution factor be well defined on the retained
frequency band.
Its deterministic and random discretizations additionally require
uniform control of reciprocal-lattice aliasing, physical offset tails,
bias tails, and angular polynomial tails, together
with polynomial local growth of the required derivatives.  We collect
these requirements here so that the remainder of the proof applies
uniformly to the three standard sigmoids through their two common
derivative profiles.

For the activation \(\phi\), derivative \(G\), and centered kernel
introduced in Subsection~\ref{sec:continuous-representation}, write
\begin{equation}
 G_\sigma(t):=\sigma^{-1}G(t/\sigma),\qquad
 K_\sigma^\phi(t,b)
 :=\phi((t-b)/\sigma)-\phi(-b/\sigma).
 \tag{4.5}
\end{equation}
Then
\begin{equation}
 \widehat G_\sigma(\tau)=\widehat G(\sigma\tau),\qquad
 \widehat K_\sigma^\phi(t,\eta)
 =t\Phi_1(-it\eta)\widehat G(-\sigma\eta).
 \tag{4.6}
\end{equation}

\begin{assumption}[Standard sigmoidal profiles]
\label{ass:activation}
The activation \(\phi:\R\to\R\) is a bounded \(C^\infty\) sigmoid whose
derivative is one of the normalized profiles
\begin{equation}
 G(t)=\phi'(t)=e^{-t^2}
 \qquad\text{or}\qquad
 G(t)=\phi'(t)=\operatorname{sech}^2t.                   \tag{4.7}
\end{equation}
Throughout, a fixed replacement
\(\widetilde\phi(t)=A_{\rm aff}\phi(\lambda t)+B_{\rm aff}\), with
\(A_{\rm aff}\ne0\), \(\lambda>0\), and \(B_{\rm aff}\in\R\), is regarded as
the same profile: \(A_{\rm aff},B_{\rm aff}\) are absorbed by the outer layer,
and \(\lambda\) by the
common inner scale.  Hence (4.7) covers the usual \(\tanh\), logistic
sigmoid, and \(\erf\) after fixed input--output normalization.
\end{assumption}

Put \(Q_*:=k+d+1\), \(h_N:=\kappa/N\), and
\(\lambda_n:=\pi n/B\), and define the
in-band deconvolution factor
\(D_N^\phi:=\sup_{|\tau|\leq2N}|\widehat G(\sigma_N\tau)|^{-1}\).
Let \(\mathcal P_L^{\Sph}\) be the spherical polynomials of degree at
most \(\lfloor L\rfloor\), and write
\[
 E_L^{\Sph}(U):=\inf_{P\in\mathcal P_L^{\Sph}}
 \|U-P\|_{L^\infty(\Sph^{d-1})}.
\]

Set \(\mathbb D_N^\phi:=D_N^\phi N^{d+1}\) and, in the bias term,
\(s_{n,\tau}:=\tau-\lambda_n\).  For \(0\leq j\leq k\), define
the activation-dependent errors
\begin{gather}
 \varepsilon_{N,j}^{\rm ali}
 :=\mathbb D_N^\phi
 \sum_{m\ne0}\sup_{|\tau|\leq2N}
 \left(1+\left|\tau+\frac{2\pi m}{h_N}\right|\right)^j
 \left|\widehat G\!\left(\sigma_N
 \left(\tau+\frac{2\pi m}{h_N}\right)\right)\right|,
                                                               \tag{4.8}\\
 \varepsilon_{N,j}^{\rm tail}
 :=\mathbb D_N^\phi
 \biggl[
 \int_{|b|>B_0}\sup_{|t|\leq R}
 |\partial_t^jK_{\sigma_N}^\phi(t,b)|\,db 
 +h_N\sum_{|\ell h_N|>B_0}\sup_{|t|\leq R}
 |\partial_t^jK_{\sigma_N}^\phi(t,\ell h_N)|
 \biggr],                                                     \tag{4.9}\\
 \varepsilon_{N,j}^{\rm bias}(C)
 :=\mathbb D_N^\phi
 \sum_{|n|>CN}\sup_{\substack{|\tau|\leq2N\\|t|\leq R}}
 (1+|s_{n,\tau}|)^j
 \left|t\Phi_1(its_{n,\tau})
 \widehat G(-\sigma_Ns_{n,\tau})\right|,                  \tag{4.10}\\
 \varepsilon_{N,j}^{\rm ang}(C)
 :=\mathbb D_N^\phi\max_{|\alpha|\leq j}
 \sup_{\substack{x\in\Omega\\|b|\leq B}}
 E_{CN}^{\Sph}\!\left(
 \partial_x^\alpha K_{\sigma_N}^\phi(\omega\cdot x,b)
 \right).                                                     \tag{4.11}
\end{gather}

\begin{lemma}[Localization estimates for the standard sigmoids]
\label{lem:standard-sigmoids}
Fix \(k\geq1\) and \(Q>0\), and suppose that
Assumption~\ref{ass:activation} holds.  There is a constant \(A_\phi>0\),
depending only on \(d,k,Q,R,B_0,B,\kappa\), such that, for \(N\geq2\),
the choice
\[
 \sigma_N=
 \begin{cases}
  \sqrt{A_\phi\log N}/N,&G(t)=e^{-t^2},\\
  A_\phi\log N/N,&G(t)=\operatorname{sech}^2t,
 \end{cases}
\]
has the following properties.
There are \(N_{\phi,k,Q}\geq2\), \(C_{\phi,k,Q}>0\),
\(S_{\phi,k}\geq0\), and \(C_{\phi,k,Q}^{\rm bw}>0\), depending only
on the fixed data and $Q$ and independent of \(N\), such that, for every
\(N\geq N_{\phi,k,Q}\),
\begin{enumerate}
 \item The profile is nondegenerate on the retained band and satisfies
 the quantitative bounds
 \begin{equation}
 \begin{gathered}
  \widehat G(\sigma_N\tau)\ne0\quad(\tau\in\R),
  \qquad D_N^\phi\leq C_{\phi,k,Q}N^{P_{\rm inv}},\\
  \max_{0\leq j\leq k}\sup_{|t|\leq R,\,|b|\leq B}
  |\partial_t^jK_{\sigma_N}^\phi(t,b)|
  \leq C_{\phi,k,Q}N^{S_{\phi,k}},
 \end{gathered}
 \tag{4.12}\label{eq:profile-quantitative-bounds}
 \end{equation}
 where
 \[
  P_{\rm inv}=
  \begin{cases}
   A_\phi,&G(t)=e^{-t^2},\\
  \pi A_\phi,&G(t)=\operatorname{sech}^2t,
  \end{cases}.
 \]
 \item For every \(0\leq j\leq k\),
 \begin{equation}
  \varepsilon_{N,j}^{\rm ali}
  +\varepsilon_{N,j}^{\rm tail}
  \leq C_{\phi,k,Q}N^{-Q}.
  \tag{4.13}\label{eq:profile-offset-errors}
 \end{equation}
 \item For every \(0\leq j\leq k\),
 \begin{equation}
  \varepsilon_{N,j}^{\rm bias}(C_{\phi,k,Q}^{\rm bw})
  +\varepsilon_{N,j}^{\rm ang}(C_{\phi,k,Q}^{\rm bw})
  \leq C_{\phi,k,Q}N^{-Q}.
  \tag{4.14}\label{eq:profile-bandwidth-errors}
 \end{equation}
\end{enumerate}
In the main theorems we use the single order \(Q_*=k+d+1\).  The
corresponding constants and exponents are then fixed independently of
\(N\).
\end{lemma}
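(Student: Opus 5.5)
Throughout, the plan rests on the explicit Fourier transforms of the two profiles. We have \(\widehat G(\xi)=\sqrt\pi\,e^{-\xi^2/4}\) for \(G=e^{-t^2}\), and \(\widehat G(\xi)=\pi\xi/\sinh(\pi\xi/2)\) for \(G=\operatorname{sech}^2\). Both are strictly positive on \(\R\), which gives nondegeneracy at once. They decay like \(e^{-\xi^2/4}\) and \(|\xi|e^{-\pi|\xi|/2}\), respectively.

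\textbf{Part 1.} On \(|\tau|\le 2N\) we have \(|\sigma_N\tau|\le 2\sqrt{A_\phi\log N}\) in the Gaussian case, so \(\widehat G(\sigma_N\tau)^{-1}\le \pi^{-1/2}N^{A_\phi}\). In the sech case \(|\sigma_N\tau|\le 2A_\phi\log N\), so \(\widehat G^{-1}\le C e^{\pi A_\phi\log N}=CN^{\pi A_\phi}\). This yields \(P_{\rm inv}\).

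For the kernel bound, write \(\partial_t^jK=\sigma_N^{-j}\phi^{(j)}((t-b)/\sigma_N)\) for \(j\ge1\). Since \(\phi^{(j)}=G^{(j-1)}\) is bounded, we get \(N^{j}\) up to logarithmic factors. For \(j=0\), boundedness of \(\phi\) suffices. Hence we may take \(S_{\phi,k}=k+1\).

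\textbf{Part 2 (aliasing and tails).} For aliasing, \(|\tau|\le2N\) and \(m\ne0\) give \(|\tau+2\pi m/h_N|\ge (2\pi|m|/\kappa-2)N\ge c|m|N\), because \(\kappa<\pi/2\). The argument of \(\widehat G\) therefore has size at least \(c|m|N\sigma_N\), which is \(c|m|\sqrt{A_\phi\log N}\) or \(c|m|A_\phi\log N\). The summand is thus at most \((C|m|N)^{j+1}N^{-cA_\phi m^2}\), respectively \(N^{-cA_\phi|m|}\). Multiplying by \(\mathbb D_N^\phi\le CN^{P_{\rm inv}+d+1}\), the crux is that \(P_{\rm inv}\) is linear in \(A_\phi\) with a \emph{smaller} constant than the decay exponent. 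In the Gaussian case the decay is \(e^{-(c'mN\sigma_N)^2/4}=N^{-A_\phi(c'm)^2/4}\) with \(c'=2\pi/\kappa-2>2\), which beats \(N^{A_\phi}\). In the sech case the decay is \(N^{-\pi A_\phi c'm/2}\) against \(N^{\pi A_\phi}\), and again \(c'/2>1\) because \(\kappa<\pi/2\). Choosing \(A_\phi\) large then makes the net exponent \(\le -Q\); this is where the constraint on \(\kappa\) is used.

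For the tails, take \(|b|>B_0>R\ge|t|\). Then \((t-b)/\sigma_N\) has size at least \((|b|-R)/\sigma_N\gtrsim N/\log N\). Both \(\phi^{(j)}\) (for \(j\ge1\)) and the centered difference \(\phi((t-b)/\sigma)-\phi(-b/\sigma)\) are bounded by \(C\sigma_N^{-j-1}\sup_{|s|\ge(|b|-R)/\sigma}|G^{(\cdot)}(s)|\), which is \(\le e^{-c(|b|-R)/\sigma_N}\). Integrating and summing over \(|b|>B_0\) gives \(e^{-cN/\log N}\), which is superpolynomially small and absorbs \(\mathbb D_N^\phi\).

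\textbf{Part 3 (bias and angular bandwidth).} For the bias term, \(|t\Phi_1(its)|\le 2/|s|\) for \(s\ne0\) and is \(\le R\) always. For \(|n|>CN\) we have \(|s_{n,\tau}|\ge \pi|n|/B-2N\ge c|n|\), so the summand is at most \(C|n|^{j}\,|\widehat G(\sigma_N c n)|\). This is bounded by \(N^{-A_\phi c^2 n^2/N^2\cdot(\cdots)}\)-type decay; more simply, for \(|n|\ge C^{\rm bw}N\) it is \(\le |n|^j e^{-c\sigma_N|n|}\). Summing gives \(e^{-cC^{\rm bw}N\sigma_N}\le N^{-cC^{\rm bw}A_\phi}\) up to polynomial factors. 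Choosing \(C^{\rm bw}\) large beats \(\mathbb D_N^\phi\).

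\textbf{Main obstacle: the angular term.} The function \(\omega\mapsto \partial_x^\alpha K(\omega\cdot x,b)\) equals \(x^\alpha\) times a one-dimensional function \(\psi(\omega\cdot x)\), where \(\psi(s)=\sigma_N^{-|\alpha|}\phi^{(|\alpha|)}((s-b)/\sigma_N)\) (plus the constant for \(\alpha=0\)). I would approximate \(\psi\) on \([-R,R]\) by Chebyshev polynomials of degree \(CN\). This composes with \(\omega\cdot x\) to give a spherical polynomial of the same degree. Now \(\phi\) (tanh or erf) extends analytically to the strip \(|\operatorname{Im}|<\pi/2\) (tanh) or is entire (erf). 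Hence \(\psi\) is analytic in a Bernstein ellipse with semi-minor axis \(\asymp\sigma_N\), with sup bounded by \(N^{C}\) on a slightly smaller ellipse. The Bernstein estimate then gives error \(\lesssim N^C\rho^{-CN}\) with \(\log\rho\asymp\sigma_N\), i.e.\ \(N^C e^{-cC^{\rm bw}N\sigma_N}=N^{C-cC^{\rm bw}A_\phi}\). Checking this uniformly in \(b\), \(x\), and \(\alpha\), together with the pole location for tanh, is the delicate step. Taking \(C^{\rm bw}\) large completes (4.14).
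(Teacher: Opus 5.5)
Your architecture matches the paper's. You use the explicit transforms for $P_{\rm inv}$, and the reciprocal-lattice gap $2N(\pi|m|/\kappa-1)$ with $\kappa<\pi/2$ for aliasing; your conditions reproduce (4.16) and (4.18). The tails come from derivative decay, the bias from the gap $|s_{n,\tau}|\gtrsim|n|$, and the angular term from Chebyshev approximation of the zonal factor on a Bernstein ellipse. For the $\operatorname{sech}^2$ profile your argument is complete. Your ellipse of semi-minor axis $\asymp\sigma_N$ is exactly the paper's choice $\theta=\operatorname{arsinh}(\min\{1,c_0\sigma_N/r\})$, with Cauchy estimates in the strip $|\operatorname{Im}w|\le c_0<\pi/2$. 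One small correction: $\partial_x^\alpha K_{\sigma_N}^\phi(\omega\cdot x,b)=\omega^\alpha\,\partial_t^{|\alpha|}K_{\sigma_N}^\phi(\omega\cdot x,b)$. The prefactor is $\omega^\alpha$, not $x^\alpha$. It depends on $\omega$ and raises the spherical degree by at most $k$, which is harmless but must be accounted for.

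The genuine gap is Part 3 for the Gaussian (erf) profile. There $N\sigma_N=\sqrt{A_\phi\log N}$, so the step $e^{-cC^{\rm bw}N\sigma_N}=N^{-cC^{\rm bw}A_\phi}$, used in both your bias and angular estimates, is false. In fact $e^{-cC^{\rm bw}\sqrt{A_\phi\log N}}$ decays more slowly than every power of $N$. No choice of $C^{\rm bw}$ makes it $O(N^{-Q})$, let alone absorbs $\mathbb D_N^\phi\asymp N^{A_\phi+d+1}$.

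In the bias term this is only a slip. Keep the quadratic exponent you first wrote, $|\widehat G(\sigma_Ns_{n,\tau})|\le\sqrt\pi\,N^{-cA_\phi n^2/N^2}$, exactly as you did for aliasing. Summed over $|n|>C^{\rm bw}N$ it gives $N^{C-cA_\phi(C^{\rm bw})^2}$.

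In the angular term, however, an ellipse of semi-minor axis $\asymp\sigma_N$ can only give $e^{-cC^{\rm bw}\sqrt{A_\phi\log N}/R}$, so an extra idea is needed. The Gaussian profile is entire with $|G^{(q)}(w)|\le C_q(1+|w|)^qe^{(\operatorname{Im}w)^2}$. You may therefore use a much larger ellipse, $\theta\asymp L\sigma_N^2/(r^2+L\sigma_N^2)$, of order $\log N/N$ rather than $\sqrt{\log N}/N$. The sup over $\mathcal E_{e^\theta}$ is then at most $N^{C}\exp(Cr^2\sinh^2\theta/\sigma_N^2)$, still polynomial, while the Chebyshev factor is $e^{-L\theta}$. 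With a small constant in $\theta$, balancing gives $N^{C}e^{-cL^2\sigma_N^2/(R^2+L\sigma_N^2)}$, which is $N^{C-cA_\phi(C^{\rm bw})^2/(1+R^2)}$ at $L=C^{\rm bw}N$. This quadratic gain in $C^{\rm bw}$ is exactly what the paper's Gaussian argument exploits.
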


\begin{proof}
First consider the Gaussian profile \(G(t)=e^{-t^2}\).  Then
\(\widehat G(\tau)=\sqrt{\pi}e^{-\tau^2/4}\) under the fixed
Fourier convention.  For \(N\geq2\), take
\begin{equation}
 \sigma_N=\frac{\sqrt{A_\phi\log N}}{N},
 \qquad A_\phi>0,\qquad 0<\kappa<\frac{\pi}{2}.          \tag{4.15}
\end{equation}
Then
\(D_N^\phi\leq CN^{A_\phi}\), and the
local derivative bound in part~1 of the lemma follows from the
Hermite--Gaussian formulas; thus \(P_{\rm inv}=A_\phi\).  The displayed
Fourier transform is strictly positive on \(\R\), and the same
Hermite--Gaussian formulas give \(G\in\mathcal S(\R)\).  Hence the
scale choice (4.15) proves \eqref{eq:profile-quantitative-bounds} for
the Gaussian profile.  Put
\(c_\kappa=\pi/\kappa-1>1\).  Since
\(
 |\tau+2\pi m/h_N|\geq2N(\pi|m|/\kappa-1)
\)
for \(|\tau|\leq2N\), Gaussian decay gives, uniformly for \(j\leq k\),
\[
 \varepsilon_{N,j}^{\rm ali}
 \leq C N^{d+1+j+A_\phi}\sum_{m\ne0}(1+|m|)^j
 N^{-A_\phi(\pi|m|/\kappa-1)^2}.
\]
Thus, for any prescribed \(Q>0\), (4.8) is \(O(N^{-Q})\) whenever
\begin{equation}
 A_\phi(c_\kappa^2-1)>Q+d+k+2.                           \tag{4.16}
\end{equation}
For the physical tail, write
\(K_{\sigma_N}^\phi(t,b)=\int_0^tG_{\sigma_N}(s-b)\,ds\); for \(j\geq1\),
\(\partial_t^jK_{\sigma_N}^\phi(t,b)
=\sigma_N^{-j}G^{(j-1)}((t-b)/\sigma_N)\).
The Hermite--Gaussian formulas, \(B_0>R\), Gaussian tail integration,
and comparison of the offset sum with the corresponding integral give,
uniformly for \(j\leq k\),
\[
 \varepsilon_{N,j}^{\rm tail}
 \leq C N^{A_\phi+d+1}\sigma_N^{1-k}
 e^{-c(B_0-R)^2/\sigma_N^2}
 =O(N^{-Q})
\]
for every prescribed \(Q>0\).  Combining this tail bound with the preceding
aliasing estimate and (4.16) proves
\eqref{eq:profile-offset-errors} for the Gaussian profile.

We next prove \eqref{eq:profile-bandwidth-errors}.  For
\(C_{\rm b}\geq\max\{1,4B/\pi\}\), \(|n|>C_{\rm b}N\), and
\(|\tau|\leq2N\), one has
\(|s_{n,\tau}|\geq\pi|n|/B-2N\geq\pi|n|/(2B)\).
Moreover, for \(|t|\leq R\),
\(\left|t\Phi_1(its_{n,\tau})\right|
=\left|(e^{its_{n,\tau}}-1)/(is_{n,\tau})\right|
\leq |t|\leq R\).
Hence (4.10), \(D_N^\phi\leq CN^{A_\phi}\), and the Gaussian Fourier
transform give, uniformly for \(j\leq k\),
\[
\begin{aligned}
 \varepsilon_{N,j}^{\rm bias}(C_{\rm b})
 &\leq C N^{A_\phi+d+1}
 \sum_{|n|>C_{\rm b}N}|n|^j
 \exp\!\left(-\frac{\pi^2A_\phi\log N}{16B^2}
                  \frac{n^2}{N^2}\right)\\
 &\leq C
 N^{A_\phi+d+k+2-\pi^2A_\phi C_{\rm b}^2/(32B^2)}.
\end{aligned}
\]
In the last step, half of the Gaussian decay at
\(|n|=C_{\rm b}N\) is used to control the tail, and the remaining sum is
bounded by \(CN^{j+1}\).  Hence \(C_{\rm b}\) can be fixed so that
\[
 \frac{\pi^2A_\phi C_{\rm b}^2}{32B^2}
 >Q+A_\phi+d+k+2.
\]

Then we give the details of the angular estimate.  Fix \(x\in\Omega\), write
\(r=|x|\), and rotate coordinates so that
\(\omega\cdot x=rz\), where \(z\in[-1,1]\).  If \(|\alpha|=j\),
\[
 \partial_x^\alpha K_{\sigma_N}^\phi(\omega\cdot x,b)
 =\omega^\alpha\partial_t^jK_{\sigma_N}^\phi(rz,b),
 \qquad
 \partial_t^jK_{\sigma_N}^\phi(t,b)
 =\sigma_N^{-j}G^{(j-1)}((t-b)/\sigma_N)
\]
for \(j\geq1\); the case \(j=0\) is the corresponding primitive
difference.  The factor \(\omega^\alpha\) is a spherical polynomial of
degree at most \(j\), so it suffices to approximate the remaining zonal
factor with degree \(L-j\).

Let \(\mathcal E_{e^\theta}\) be the Bernstein ellipse in the \(z\)-plane.
The Chebyshev projection estimate
\cite[Theorem~8.2]{TrefethenATAP} shows that, if \(F\) is analytic on
\(\mathcal E_{e^\theta}\) and bounded there by \(M_\theta\), then there
is a univariate polynomial \(p_m\) of degree at most \(m\) such that
\[
 \|F-p_m\|_{L^\infty([-1,1])}
 \leq \frac{2M_\theta e^{-m\theta}}{e^\theta-1}
 \leq C\theta^{-1}e^{-m\theta}M_\theta,
 \qquad 0<\theta\leq1.
\]
Since \(z\) is a linear coordinate of \(\omega\), \(p_m(z)\) is a
spherical polynomial of degree at most \(m\).  For \(L\geq k\), taking
\(m=\lfloor L\rfloor-j\) and multiplying by \(\omega^\alpha\) therefore
gives a spherical polynomial of degree at most \(\lfloor L\rfloor\).
This estimate is valid in every dimension \(d\geq2\).  On
\(\mathcal E_{e^\theta}\),
\(|\operatorname{Im}z|\leq\sinh\theta\).  The Hermite--Gaussian formula
for \(G^{(q)}\) and integration along a straight segment when \(j=0\)
therefore give, uniformly for \(|b|\leq B\),
\[
 \max_{z\in\mathcal E_{e^\theta}}
 |\partial_t^jK_{\sigma_N}^\phi(rz,b)|
 \leq C N^{\Pi_0}
 \exp\!\left(Cr^2\sinh^2\theta/\sigma_N^2\right),
 \qquad 0\leq j\leq k,
\]
where \(\Pi_0\) is independent of \(L,N,r,b\).  Take
\(\theta=c_1L\sigma_N^2/(r^2+L\sigma_N^2)\) with a sufficiently small
fixed \(c_1\); for the values \(L\asymp N\) used below, \(\theta\leq1\)
when \(N\) is large.  Moreover, \(\theta^{-1}\leq CN\) uniformly for
\(0<r\leq R\) and can be absorbed into the polynomial prefactor.
Substitution in the preceding bound, with \(r=0\) treated separately,
yields uniformly for \(0\leq r\leq R\)
\[
 \max_{|\alpha|\leq k}\sup_{x,b}
 E_L^{\Sph}\!\left(\partial_x^\alpha
 K_{\sigma_N}^\phi(\omega\cdot x,b)\right)
 \leq C N^{\Pi_{\rm G}}
 e^{-cL^2\sigma_N^2/(R^2+L\sigma_N^2)}.
\]
Here \(\Pi_{\rm G}\) is finite, depends only on
\(d,k,A_\phi,R,B\), and is independent of \(L\) and \(N\).
At \(L=C_{\rm ang}N\), first fixing \(A_\phi\) by (4.16) and then fixing
\(C_{\rm ang}\) so that
\(cA_\phi C_{\rm ang}^2/(1+R^2)>Q+\Pi_{\rm G} + A_\phi + d + 1\) makes the angular
indicator in (4.11) \(O(N^{-Q})\).  Combining this angular estimate with
the preceding bias bound and taking
\(C_{\phi,k,Q}^{\rm bw}:=\max(C_{\rm b},C_{\rm ang})\)
proves \eqref{eq:profile-bandwidth-errors} for the Gaussian profile.

Next we consider \(G(t)=\operatorname{sech}^2t\) and use the normalized
representative
\begin{equation}
 \phi(t)=\tanh t,\qquad G(t)=\operatorname{sech}^2t,\qquad
 \widehat G_\sigma(\tau)
 =\frac{\pi\sigma\tau}{\sinh(\pi\sigma\tau/2)}.         \tag{4.17}
\end{equation}
The Fourier transform does not vanish on \(\R\).  With \(A_\phi>0\) and
\(\sigma_N=A_\phi\log N/N\) for \(N\geq2\), we
have \(D_N^\phi\leq CN^{\pi A_\phi}\), while
all local derivatives grow at most polynomially; thus
\(P_{\rm inv}=\pi A_\phi\).  Exponential decay of all derivatives of
\(G\) also gives \(G\in\mathcal S(\R)\).  Consequently, (4.17) and the
chosen scale prove \eqref{eq:profile-quantitative-bounds} for the
\(\operatorname{sech}^2\) profile.  The estimate
\(|\widehat G(\xi)|\leq C(1+|\xi|)e^{-\pi|\xi|/2}\), the preceding
reciprocal-lattice gap, and \(\kappa<\pi/2\) give
\[
 \varepsilon_{N,j}^{\rm ali}
 \leq C N^{d+k+2+\pi A_\phi}
 \sum_{m\ne0}(1+|m|)^{k+1}
 N^{-\pi A_\phi(\pi|m|/\kappa-1)}=O(N^{-Q})
\]
for any prescribed \(Q>0\), provided
\begin{equation}
 \pi A_\phi\left(\frac{\pi}{\kappa}-2\right)>Q+d+k+2.    \tag{4.18}
\end{equation}
By the same argument, exponential decay of all derivatives of \(G\)
gives, uniformly for \(j\leq k\),
\[
 \varepsilon_{N,j}^{\rm tail}
 \leq C N^{\pi A_\phi+d+1}\sigma_N^{1-k}
 e^{-c(B_0-R)/\sigma_N}
 =O(N^{-Q}).
\]
The preceding aliasing estimate, this tail bound, and (4.18) prove
\eqref{eq:profile-offset-errors} for the \(\operatorname{sech}^2\) profile.

We next prove \eqref{eq:profile-bandwidth-errors}.  The same Fourier
estimate and the gap
\(|s_{n,\tau}|\geq(\pi C_{\rm b}/B-2)N\) give
\[
 \varepsilon_{N,j}^{\rm bias}(C_{\rm b})
 \leq C N^{d+k+2+\pi A_\phi-c_{\rm b}A_\phi C_{\rm b}},
\]
where
\(c_{\rm b}>0\) depends only on \(B\) and the fixed Fourier
normalization.

Finally, \(\tanh z\) is holomorphic in
\(|\operatorname{Im}z|<\pi/2\).  The same reduction
\(\omega\cdot x=rz\) and the same Chebyshev projection estimate apply.
Choose \(0<c_0<\pi/2\) and, for \(r>0\), set
\(\theta=\operatorname{arsinh}(\min\{1,c_0\sigma_N/r\})\).  Then
\(|\operatorname{Im}((rz-b)/\sigma_N)|\leq c_0\), so the ellipse remains
in a fixed smaller holomorphy strip.  Cauchy's estimate gives, uniformly
for \(j\leq k\),
\[
 \max_{z\in\mathcal E_{e^\theta}}
 |\partial_t^jK_{\sigma_N}^\phi(rz,b)|
 \leq C\sigma_N^{-j}.
\]
The case \(r=0\) is immediate, and multiplication by \(\omega^\alpha\)
only shifts the spherical degree by at most \(k\).  Moreover,
\(\theta^{-1}\leq C(1+r/\sigma_N)\leq CN\), so the Chebyshev prefactor
is absorbed into the polynomial factor.  Consequently the analytic
zonal estimate yields
\(
 \varepsilon_{N,j}^{\rm ang}(C_{\rm ang})
 \leq C N^{d+k+2+\pi A_\phi-c_{\rm ang}A_\phi C_{\rm ang}/R}.
\)
Here \(c_{\rm ang}>0\) depends only on the fixed smaller holomorphy
strip.  After \(A_\phi\) is fixed by (4.18), choose \(C_{\rm b}\) and
\(C_{\rm ang}\) so that the last two exponents are below \(-Q\), and
set
\(C_{\phi,k,Q}^{\rm bw}=\max(C_{\rm b},C_{\rm ang})\).
The bias and angular bounds then prove
\eqref{eq:profile-bandwidth-errors} for every $j\leq k$.  The standard logistic
sigmoid is \((1+\tanh(t/2))/2\), so it is covered by the fixed
input--output normalization stated in Assumption~\ref{ass:activation}.
This completes both profile
cases and proves the lemma.
\end{proof}

\begin{remark}
\label{rem:broader-activations}
In general, the analysis extends to any smooth sigmoid
\(\phi\) with \(G=\phi'\in\mathcal S(\R)\) and zero-free \(\widehat G\),
provided that, for every prescribed \(J>0\), \(\sigma_N\) can be chosen
so that \(\sigma_N^{-1}\) and \(D_N^\phi\) grow at most polynomially and
(4.8)--(4.11) are \(O(N^{-J})\).
\end{remark}

\subsection{Offset quadrature}
\label{sec:offset-quadrature}

We now discretize the offset variable.  The first lemma justifies the
Poisson formula used by the uniform rule; the second collects the resulting
coefficient, truncation, and quadrature estimates.

Set \(b_{\ell,N}=\ell h_N\), where \(h_N=\kappa/N\), and define
\begin{equation}
 \begin{aligned}
 F_N^\phi(\omega,x)&:=h_N
 \sum_{|\ell h_N|\leq B}a_N(\omega,\ell h_N)
 \phi\!\left(\frac{\omega\cdot x-\ell h_N}{\sigma_N}\right),\\
 \overline F_N^\phi(\omega,x)&:=F_N^\phi(\omega,x)-F_N^\phi(\omega,0).
 \end{aligned}
 \tag{4.19}
\end{equation}

\begin{lemma}[Schwartz admissibility for offset sampling]
\label{lem:poisson-admissibility}
Under the hypotheses and scale choices of
Lemma~\ref{lem:standard-sigmoids}, let $f\in H^k(\R^d)$ have fixed
compact support and let $a_N$ be the coefficient density constructed
in Subsection~\ref{sec:continuous-representation}.  For every
$N\geq N_{\phi,k,Q}$,
$\omega\in\Sph^{d-1}$,
$|t|\leq R$, and integer $0\leq j\leq k$, the function
\begin{equation}
 F_{N,\omega,t,j}(b)
 :=a_N(\omega,b)\partial_t^jK_{\sigma_N}^\phi(t,b)
 \tag{4.20}\label{eq:poisson-function}
\end{equation}
belongs to $\mathcal S(\R)$.  Its Schwartz seminorms are uniform in
$\omega$ and $|t|\leq R$ and grow at most polynomially in $N$.
Consequently, for every $h>0$,
\begin{equation}
 h\sum_{\ell\in\mathbb Z}F_{N,\omega,t,j}(\ell h)
 =\sum_{m\in\mathbb Z}
 \widehat F_{N,\omega,t,j}\!\left(\frac{2\pi m}{h}\right),
 \tag{4.21}\label{eq:poisson-formula}
\end{equation}
and both series converge absolutely.  Moreover,
\begin{equation}
 |\widehat F_{N,\omega,t,j}(\eta)|
 \leq C_R\int_{-2N}^{2N}|\widehat a_N(\omega,\tau)|
 (1+|\eta-\tau|)^j
 |\widehat G(\sigma_N(\eta-\tau))|\,d\tau .
 \tag{4.22}\label{eq:poisson-alias-bound}
\end{equation}
\end{lemma}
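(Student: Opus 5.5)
The proof splits into three steps: establish the Schwartz property of the two factors of \(F_{N,\omega,t,j}\) separately, deduce the Poisson formula from it, and compute \(\widehat F_{N,\omega,t,j}\) as a convolution on the Fourier side to get (4.22).

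\textbf{The coefficient density.} By construction, \(\widehat a_N(\omega,\tau)=i\tau\,\widehat G(\sigma_N\tau)^{-1}\abs\tau^{d-1}\chi(\abs\tau/N)\widehat f(\tau\omega)\) is supported in \([-2N,2N]\). Since \(f\) has fixed compact support in \(B_{R_0}\), \(\widehat f\) is entire, and its derivatives satisfy
\[
 \abs{\partial^\beta\widehat f(\zeta)}\leq C_\beta\norm f_{L^1}\leq C\norm f_{L^2}.
\]
The factor \(\abs\tau^{d-1}\) is not smooth at \(0\) when \(d\) is even, so it is unwise to argue via smoothness of \(\widehat a_N\). Instead, I would argue on the physical side:
\[
 a_N(\omega,b)=(2\pi)^{-1}\int_{-2N}^{2N}\widehat a_N(\omega,\tau)e^{ib\tau}\,d\tau .
\]
This shows that \(a_N(\omega,\cdot)\) is smooth. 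Each derivative is bounded by \(CN^{p}\,D_N^\phi\,\norm f\), which is polynomial in \(N\) by (4.12). Decay in \(b\) needs a separate argument, because \(\widehat a_N\) may fail to be smooth at \(\tau=0\). For \(d\) odd, \(\abs\tau^{d-1}\) is a polynomial and \(\widehat a_N\) is smooth. In general, I would use that \(q_N\) is the deconvolution of \(g_N=\Lambda^{d-1}\mathcal R f_N\). The decay of \(g_N\) in \(b\) is only polynomial, of order \(\abs b^{-d}\), when \(d\) is even. So \(a_N\) itself need not be Schwartz; what matters is the product.

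\textbf{The product and the Poisson formula.} The key observation is that \(\partial_t^jK_{\sigma_N}^\phi(t,b)\) decays superpolynomially in \(b\), uniformly for \(\abs t\leq R\), with polynomial-in-\(N\) constants. For \(j\geq1\) this follows from \(\sigma_N^{-j}G^{(j-1)}((t-b)/\sigma_N)\) and \(G\in\mathcal S\). For \(j=0\) it follows from the centered difference \(\int_0^tG_{\sigma_N}(s-b)\,ds\). The same holds for all of its \(b\)-derivatives. Multiplying a smooth function that has polynomial growth and polynomially bounded derivatives by a Schwartz function gives a Schwartz function. The seminorms are bounded by products of the two factors' bounds, hence are polynomial in \(N\) and uniform in \(\omega\) and \(\abs t\leq R\). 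Poisson summation for Schwartz functions then yields (4.21), with absolute convergence of both sides.

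\textbf{The bound (4.22) and the main obstacle.} For (4.22), I would write \(\widehat F=(2\pi)^{-1}\widehat a_N(\omega,\cdot)*\widehat{\partial_t^jK}(t,\cdot)\). By (4.6), \(\widehat{\partial_t^jK}(t,\eta)=\partial_t^j[t\Phi_1(-it\eta)]\widehat G(-\sigma_N\eta)\), where \(t\Phi_1(-it\eta)=(1-e^{-it\eta})/(i\eta)\). For \(j\geq1\), differentiation gives \((-i\eta)^{j-1}e^{-it\eta}\), whose modulus is at most \((1+\abs\eta)^j\). For \(j=0\), the bound \(\abs t\leq R\) holds. Since \(G\) is even, the sign in \(\widehat G(-\sigma_N\eta)\) is irrelevant. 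Evaluating the convolution at \(\eta\), with \(\tau\) ranging over the support \([-2N,2N]\), gives exactly (4.22).

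The main obstacle is the Schwartz claim when \(d\) is even, where \(\widehat a_N\) has a kink at \(\tau=0\). The plan handles it by not requiring \(a_N\) to be Schwartz. Only smoothness with polynomially bounded derivatives is needed, and this holds because \(a_N\) is bandlimited with \(\widehat a_N\in L^1\). All decay is then supplied by the kernel factor.
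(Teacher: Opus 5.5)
Your proposal is correct and follows essentially the same route as the paper. Both arguments obtain bounded, polynomially controlled $b$-derivatives of $a_N$ by Fourier inversion of the compactly supported $\widehat a_N$, and polynomially growing Schwartz seminorms of $\partial_t^jK_{\sigma_N}^\phi(t,\cdot)$ from $G\in\mathcal S(\R)$; Leibniz' rule then gives the Schwartz property, Poisson summation gives (4.21), and the product--convolution identity with (4.6) gives (4.22). Your observation that $a_N$ itself need not be Schwartz for even $d$ is correct, and the paper's argument likewise never requires it, since the kernel factor supplies all the decay.
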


\begin{proof}
The compact frequency support of $\widehat a_N$ and the fixed support of
$f$ imply, for every integer $r\geq0$,
\begin{equation}
 \sup_{\omega\in\Sph^{d-1}}
 \int_{-2N}^{2N}|\tau|^r|\widehat a_N(\omega,\tau)|\,d\tau
 \leq C_rD_N^\phi N^{d+r+1}\|f\|_{H^k(\R^d)}.
 \tag{4.23}\label{eq:a-derivative-bound}
\end{equation}
Indeed, $\|\widehat f\|_\infty\leq\|f\|_1
\leq C\|f\|_{L^2}$ on the fixed support.  Fourier inversion therefore
shows that every $b$-derivative of $a_N(\omega,\cdot)$ is bounded,
uniformly in $\omega$, by a fixed power of $N$.

By part~1 of Lemma~\ref{lem:standard-sigmoids}, $G$ and all its
derivatives are Schwartz functions.  Since
\(K_{\sigma_N}^\phi(t,b)=\int_0^tG_{\sigma_N}(s-b)\,ds\) and
\(\partial_t^jK_{\sigma_N}^\phi(t,b)
=\partial_t^{j-1}G_{\sigma_N}(t-b)\) for \(j\geq1\),
all $b$-Schwartz seminorms of $\partial_t^jK_{\sigma_N}^\phi(t,\cdot)$
are uniform for $|t|\leq R$ and grow at most polynomially in $N$.
Leibniz' rule and \eqref{eq:a-derivative-bound} now give
$F_{N,\omega,t,j}\in\mathcal S(\R)$ with the asserted uniformity.
The classical Poisson formula for Schwartz functions gives
\eqref{eq:poisson-formula}.  Finally, the product--convolution identity
and (4.6) give
\[
 \widehat F_{N,\omega,t,j}(\eta)
 =\frac1{2\pi}\int_{-2N}^{2N}\widehat a_N(\omega,\tau)
 \partial_t^j\widehat K_{\sigma_N}^\phi(t,\eta-\tau)\,d\tau .
\]
Using $|t\Phi_1(-it\xi)|\leq|t|$ for $j=0$ and the explicit
$t$-derivatives of (4.6) for $j\geq1$ proves
\eqref{eq:poisson-alias-bound}.
\end{proof}

Retain the cutoff, coefficient density \(a_N\), and inversion constant
constructed in Subsection~\ref{sec:continuous-representation}.

\begin{lemma}[Coefficient and offset quadrature estimates]
\label{lem:offset-estimates}
Fix \(k\geq1\) and \(Q>0\), let \(\phi\) satisfy
Assumption~\ref{ass:activation}, and choose the scale and constants as in
Lemma~\ref{lem:standard-sigmoids}.  Let \(f\in H^k(\R^d)\) be supported in
a fixed ball and set
\[
 \gamma_{\phi,d,k}:=P_{\rm inv}
 +\left(\frac{d+1}{2}-k\right)_+.
\]
Then, for every \(N\geq N_{\phi,k,Q}\), the following conclusions hold.
\begin{enumerate}
\item With \(\beta_{d,k}:=(d-k+\tfrac12)_+\),
\begin{equation}
 \begin{aligned}
 \sup_\omega\|\widehat a_N(\omega,\cdot)\|_{L^1(\R)}
 &\leq C N^{P_{\rm inv}+\beta_{d,k}}\|f\|_{H^k(\R^d)},\\
 \|a_N\|_{L^2(\Sph^{d-1}\times[-B,B])}
 &\leq C N^{\gamma_{\phi,d,k}}\|f\|_{H^k(\R^d)}.
 \end{aligned}                                             \tag{4.24}
\end{equation}
\item For the fixed accuracy order \(Q\),
denote by
\({\cal E}_{N,j}^\phi(\omega,t)\) the quantity
\[
 \partial_t^j\!\left[
 \int_\R a_N(\omega,b)K_{\sigma_N}^\phi(t,b)\,db
 -h_N\!\sum_{|\ell h_N|\leq B}
 a_N(\omega,\ell h_N)K_{\sigma_N}^\phi(t,\ell h_N)
 \right].
\]
Then
\begin{equation}
 \max_{0\leq j\leq k}\sup_{\omega,\,|t|\leq R}
 |{\cal E}_{N,j}^\phi(\omega,t)|
 \leq C N^{-Q}\|f\|_{H^k(\R^d)}.                       \tag{4.25}
\end{equation}
\item For every multi-index $\alpha$ with $|\alpha|\leq k$, the
continuous and discrete physical tails satisfy the explicit bound
\begin{equation}
 \begin{aligned}
 \sup_{\substack{\omega\in\Sph^{d-1}\\x\in\Omega}}
 \bigg[&\int_{|b|>B_0}
  |a_N(\omega,b)\partial_x^\alpha
    K_{\sigma_N}^\phi(\omega\cdot x,b)|\,db\\
 &+h_N\sum_{|\ell h_N|>B_0}
  |a_N(\omega,\ell h_N)\partial_x^\alpha
    K_{\sigma_N}^\phi(\omega\cdot x,\ell h_N)|\bigg]
 \leq CN^{-Q}\|f\|_{H^k(\R^d)}.
 \end{aligned}                                           \tag{4.26}
\end{equation}
\end{enumerate}
\end{lemma}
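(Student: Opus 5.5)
The proof splits along the three parts. Everything runs through the explicit Fourier description \(\widehat a_N(\omega,\tau)=i\tau\,\widehat G(\sigma_N\tau)^{-1}\abs\tau^{d-1}\chi(\abs\tau/N)\widehat f(\tau\omega)\), supported in \(\abs\tau\leq2N\). The error terms are then handled by Lemma~\ref{lem:standard-sigmoids} and Lemma~\ref{lem:poisson-admissibility}.

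\textbf{Part 1.} For the \(L^1\) bound, bound \(\abs{\widehat G(\sigma_N\tau)}^{-1}\leq D_N^\phi\leq CN^{P_{\rm inv}}\) and apply Cauchy--Schwarz on \([-2N,2N]\):
\[
\int\abs\tau^{d}\abs{\widehat f(\tau\omega)}\,d\tau\leq\Big(\int(1+\abs\tau)^{2k}\abs{\widehat f(\tau\omega)}^2d\tau\Big)^{1/2}\Big(\int_{-2N}^{2N}\abs\tau^{2d}(1+\abs\tau)^{-2k}d\tau\Big)^{1/2}.
\]
The second factor is \(CN^{(d-k+1/2)_+}\), up to a logarithm in the borderline case \(d-k+\tfrac12=0\); that case cannot occur since \(d,k\) are integers. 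The first factor is a one-dimensional slice of \(\widehat f\), and it must be controlled uniformly in \(\omega\). Since \(f\) has fixed compact support, \(\widehat f\) and \(\nabla\widehat f\) are controlled, but slices of an \(H^k\) function need not be uniformly bounded. I would therefore fall back on \(\norm{\widehat f}_\infty\leq C\norm f_{L^2}\) together with a weighted bound. This uses the room in \(\beta_{d,k}\), which already includes the \(+\tfrac12\).

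Concretely, write \(\abs\tau^{d}\abs{\widehat f}\leq\abs\tau^{d}\min(\norm{\widehat f}_\infty,\dots)\). If the slice estimate fails, use \(\norm{\widehat f}_\infty\) directly to get \(N^{P_{\rm inv}+d+1}\). To reach the stated \(\beta_{d,k}\), I would apply the trace-type inequality \(\sup_\omega\int(1+\abs\tau)^{2s}\abs{\widehat f(\tau\omega)}^2d\tau\lesssim\norm f_{H^{k}}^2\) for \(s<k\). This follows from compact support: \(\widehat f\) is smooth with \(\abs{\nabla\widehat f}\) bounded by a similar norm, and a Sobolev embedding on rays applies. Verifying this slice estimate is the main obstacle.

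For the \(L^2\) bound over \(\Sph^{d-1}\times[-B,B]\), use Plancherel in \(b\) and polar coordinates instead. This gives
\[
\int_{\Sph^{d-1}}\int\tau^2\abs\tau^{2d-2}\abs{\widehat G}^{-2}\chi^2\abs{\widehat f(\tau\omega)}^2d\tau\,d\omega\leq (D_N^\phi)^2\int_{\abs\zeta\leq2N}\abs\zeta^{d+1}\abs{\widehat f(\zeta)}^2d\zeta,
\]
because \(d\zeta=\abs\tau^{d-1}d\tau\,d\omega\). This is at most \(C(D_N^\phi)^2N^{2((d+1)/2-k)_+}\norm f_{H^k}^2\), which is exactly \(\gamma_{\phi,d,k}\).

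\textbf{Part 2.} Fix \(\omega\) and \(t\), and apply \eqref{eq:poisson-formula} to \(F_{N,\omega,t,j}\) with \(h=h_N\). The quadrature error then splits into two pieces. The first is the aliasing terms \(\sum_{m\ne0}\widehat F(2\pi m/h_N)\). By \eqref{eq:poisson-alias-bound} and the \(L^1\) bound \eqref{eq:a-derivative-bound} with \(r=d\), these are bounded by \(\varepsilon^{\rm ali}_{N,j}\norm f_{H^k}\); the prefactor \(\mathbb D_N^\phi=D_N^\phi N^{d+1}\) was designed to absorb exactly this. The second piece is the truncation of the sum to \(\abs{\ell h_N}\leq B\). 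Since \(B>B_0\), this is part of the discrete tail in (4.9). Both pieces are \(O(N^{-Q})\) by \eqref{eq:profile-offset-errors}.

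\textbf{Part 3.} Use \(\norm{a_N(\omega,\cdot)}_\infty\leq(2\pi)^{-1}\norm{\widehat a_N(\omega,\cdot)}_{L^1}\leq CD_N^\phi N^{d+1}\norm f_{H^k}\), again via \eqref{eq:a-derivative-bound}. Also use \(\partial_x^\alpha K(\omega\cdot x,b)=\omega^\alpha\partial_t^{\abs\alpha}K\) with \(\abs{\omega\cdot x}\leq R\). The two tails are then bounded by \(\varepsilon^{\rm tail}_{N,j}\norm f_{H^k}\), which is \(O(N^{-Q})\) by \eqref{eq:profile-offset-errors}.
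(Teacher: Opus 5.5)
Your $L^2$ bound in Part~1 and your Parts~2 and~3 follow the paper's argument. For the $L^2$ bound this means Plancherel and polar coordinates with $\sup_{r\leq2N}r^{d+1}(1+r^2)^{-k}$. Part~2 is Poisson summation with the nonzero aliases controlled by (4.22) and (4.8), and the truncation to $|\ell h_N|\leq B$ absorbed in the discrete tail of (4.9). Part~3 uses $\|a_N\|_\infty\leq(2\pi)^{-1}\|\widehat a_N\|_1$ against the physical tails. One citation slip: the $L^1$ input there is (4.23) with $r=0$, i.e.\ $\|\widehat a_N(\omega,\cdot)\|_{L^1}\leq CD_N^\phi N^{d+1}\|f\|_{H^k}$. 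With $r=d$ you would get $N^{2d+1}$, which $\mathbb D_N^\phi$ does not absorb.

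The genuine gap is the first estimate in (4.24). Your Cauchy--Schwarz step produces $N^{\beta_{d,k}}$ only if the slice factor carries the full weight:
\[
 \sup_{\omega\in\Sph^{d-1}}\int_\R(1+\tau^2)^k|\widehat f(\tau\omega)|^2\,d\tau\leq C\|f\|_{H^k(\R^d)}^2 ,
\]
that is, with $s=k$. You propose the slice bound only for $s<k$. That yields $N^{(d-s+1/2)_+}$, which is strictly worse than $N^{\beta_{d,k}}$ whenever $k\leq d$. Your fallback through $\|\widehat f\|_\infty$ gives only $N^{P_{\rm inv}+d+1}$. Both weaker bounds would suffice for Parts~2--3 but not for the stated exponent, and you leave the slice estimate itself unverified.

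Your worry that slices need not be uniformly controlled does not apply, because $f$ has fixed support. The paper's argument runs as follows. Set $g_j=(\omega\cdot\nabla)^jf$. By the Fourier-slice identity and one-dimensional Plancherel, $\int_\R\tau^{2j}|\widehat f(\tau\omega)|^2\,d\tau=2\pi\|\mathcal R g_j(\omega,\cdot)\|_{L^2(\R)}^2$. Each section $\{x\cdot\omega=t\}\cap B_{R_0}$ has $(d-1)$-measure at most $CR_0^{d-1}$, so Cauchy--Schwarz on it gives $\|\mathcal R g_j(\omega,\cdot)\|_{L^2}^2\leq C\|g_j\|_{L^2(\R^d)}^2\leq C\|f\|_{H^k}^2$, uniformly in $\omega$. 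Summing over $0\leq j\leq k$ gives the display.

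Your ``Sobolev embedding on rays'' idea can also be completed at $s=k$. Since $\partial_\zeta^\beta\widehat f=\widehat{(-ix)^\beta f}$, and multiplication by $x^\beta$ is bounded on $H^k$ functions supported in $B_{R_0}$, the function $(1+|\zeta|^2)^{k/2}\widehat f$ lies in every $H^n(\R^d)$ with norm at most $C_n\|f\|_{H^k}$. For $n>(d-1)/2$, its restriction to lines through the origin is bounded in $L^2$, uniformly by rotation invariance.

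As written, your proposal includes neither argument, so the sharp $L^1$ coefficient bound is not established.
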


\begin{proof}
The first estimate in (4.24) follows from
Cauchy--Schwarz in \(\tau\) and the uniform slice estimate
\[
 \sup_{\omega\in\Sph^{d-1}}
 \int_\R(1+\tau^2)^k|\widehat f(\tau\omega)|^2\,d\tau
 \leq C\|f\|_{H^k(\R^d)}^2.
\]
The latter follows by applying one-dimensional Plancherel to the Radon
transforms of \((\omega\cdot\nabla)^j f\), \(0\leq j\leq k\), and using
the fixed support of \(f\).  The second estimate in (4.24) follows from
Plancherel, polar integration, and
\[
 \sup_{0\leq r\leq2N}\frac{r^{d+1}}{(1+r^2)^k}
 \leq C N^{2((d+1)/2-k)_+}.
\]
For (4.25), apply Lemma~\ref{lem:poisson-admissibility} with $h=h_N$.
The zero Fourier sample in \eqref{eq:poisson-formula} is the continuous integral,
and \eqref{eq:poisson-alias-bound} bounds the sum of all nonzero reciprocal-lattice
samples by (4.8).  Removing the nodes and integral outside $[-B,B]$
costs no more than the stronger tail bound (4.9), which starts at
\(B_0<B\).  Finally,
$\partial_x^\alpha K_{\sigma_N}^\phi(\omega\cdot x,b)
=\omega^\alpha\partial_t^{|\alpha|}K_{\sigma_N}^\phi(\omega\cdot x,b)$,
and $\|a_N\|_\infty\leq(2\pi)^{-1}\|\widehat a_N\|_1$.
Thus (4.9) and (4.24) give (4.26).  Since
$\beta_{d,k}\leq d+1$, the $N^{d+1}$ factor in
$\mathbb D_N^\phi$ dominates the coefficient loss.  Part~2 of
Lemma~\ref{lem:standard-sigmoids} proves these conclusions.
\end{proof}

\begin{corollary}
\label{cor:offset-quadrature}
Under the hypotheses of Lemma~\ref{lem:offset-estimates}, simultaneously
for every integer \(0\leq m\leq k\),
\begin{equation}
 \left\|f_N-f_N(0)
 -c_d\int_{\Sph^{d-1}}\overline F_N^\phi(\omega,\cdot)\,d\omega
 \right\|_{H^m(\Omega)}
 \leq C N^{-Q}\|f\|_{H^k(\R^d)}.                        \tag{4.27}
\end{equation}
\end{corollary}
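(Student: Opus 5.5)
The proof will start from the continuous ridge representation (4.4) and compare it, pointwise in \(\omega\), with the offset-discretized ridge function \(\overline F_N^\phi\). First we record that
\[
 \overline F_N^\phi(\omega,x)
 =h_N\sum_{|\ell h_N|\leq B}a_N(\omega,\ell h_N)
 K_{\sigma_N}^\phi(\omega\cdot x,\ell h_N).
\]
This identity holds because subtracting \(F_N^\phi(\omega,0)\) replaces each \(\phi((\omega\cdot x-\ell h_N)/\sigma_N)\) by the centered kernel \(K_{\sigma_N}^\phi(\omega\cdot x,\ell h_N)\). Hence, by (4.4), the function inside the norm in (4.27) equals
\[
 c_d\int_{\Sph^{d-1}}\mathcal E_{N,0}^\phi(\omega,\omega\cdot x)\,d\omega,
\]
where \(\mathcal E_{N,j}^\phi\) is the quadrature defect of Lemma~\ref{lem:offset-estimates}. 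To apply that lemma, the \(t\)-argument \(\omega\cdot x\) must satisfy \(|\omega\cdot x|\leq|x|\leq R\), and this holds for \(x\in\Omega\).

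Next we differentiate in \(x\). For a multi-index \(\alpha\) with \(|\alpha|=j\leq m\leq k\), the chain rule gives
\[
 \partial_x^\alpha\,\mathcal E_{N,0}^\phi(\omega,\omega\cdot x)
 =\omega^\alpha\,\mathcal E_{N,j}^\phi(\omega,\omega\cdot x).
\]
We must justify differentiating under the \(\omega\)-integral, and also differentiating the continuous \(b\)-integral in \(t\). Lemma~\ref{lem:poisson-admissibility} provides this: \(a_N(\omega,\cdot)\,\partial_t^jK_{\sigma_N}^\phi(t,\cdot)\) is Schwartz with seminorms uniform in \(\omega\) and \(|t|\leq R\), so dominated convergence applies. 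The finite offset sum is smooth with no further argument.

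By (4.25), \(|\omega^\alpha|\leq1\) and \(|\Sph^{d-1}|<\infty\) give
\[
 \sup_{x\in\Omega}\bigl|\partial_x^\alpha(\cdot)\bigr|\leq CN^{-Q}\|f\|_{H^k(\R^d)}.
\]
Squaring, integrating over the bounded set \(\Omega\) and summing over \(|\alpha|\leq m\) then yields (4.27). The constant depends only on \(|\Omega|\), \(d\), \(k\) and \(c_d\). The bound holds simultaneously for all \(m\) because it comes from a single uniform bound covering every \(j\leq k\).

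The only delicate step is the interchange of derivatives and integrals, together with the identification of the \(j\)-th \(x\)-derivative with \(\omega^\alpha\mathcal E_{N,j}^\phi\). This identification requires that (4.25) controls \(\partial_t^j\) applied to the full difference, continuous integral minus truncated sum, which is exactly how \(\mathcal E_{N,j}^\phi\) is defined. The Schwartz uniformity from Lemma~\ref{lem:poisson-admissibility} handles the interchange.
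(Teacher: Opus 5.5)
Your proposal is correct and follows the paper's own argument. The paper's proof consists of exactly these steps: differentiate the exact representation (4.4), identify the \(x\)-derivatives of the remainder with \(\omega^\alpha\mathcal E_{N,j}^\phi(\omega,\omega\cdot x)\), apply (4.25) for \(j=0,\ldots,m\), and integrate over \(\Sph^{d-1}\) and \(\Omega\). Your centered-kernel identity for \(\overline F_N^\phi\) and your justification of the derivative--integral interchanges via Lemma~\ref{lem:poisson-admissibility} fill in details that the paper leaves implicit.
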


\begin{proof}
Differentiate the exact representation (4.4), apply (4.25) with
$j=0,\ldots,m$, and integrate in $\omega$.
\end{proof}

\subsection{Effective parameter-space bandwidth}
\label{sec:effective-bandwidth}

The offset quadrature error is already controlled by
Subsection~\ref{sec:offset-quadrature}.  We now
establish the angular bandwidth required by the spherical design.  The joint
estimate below also records the trigonometric bandwidth in the offset
parameter.  This distinct finite-dimensional estimate is used only for the
random parameter-space cubature in
Subsection~\ref{sec:parameter-bandwidth}.

Fix once and for all
\(\psi_c\in C_c^\infty((-B,B))\) with \(\psi_c=1\) on a neighborhood
of \([-B_0,B_0]\), and let \(\psi\) be its smooth \(2B\)-periodic
extension.  Write \(\Sph_B^1:=\R/(2B\mathbb Z)\) and
\(\Theta:=\Sph^{d-1}\times\Sph_B^1\); in periodic arguments,
\([-B,B]\) is identified with this circle up to its measure-zero
endpoints.  Let \({\cal T}_L^B\) be the trigonometric polynomials with
frequencies \(|n|\leq L\), and set
\({\cal V}_L:=\mathcal P_L^{\Sph}\otimes{\cal T}_L^B\) and
\(D_L:=\dim{\cal V}_L\asymp L^d\).

\begin{lemma}[Effective parameter bandwidth]
\label{lem:activation-bandwidth}
Under the hypotheses of Lemma~\ref{lem:offset-estimates}, put
\(\widetilde a_N=\psi a_N\) on \(\Theta\) and
\[
 H_N((\omega,b),x):=
 c_d a_N(\omega,b)K_{\sigma_N}^\phi(\omega\cdot x,b).
\]
For a $2B$-periodic function $T$ in the offset variable, use the
normalization
\[
 \widehat T_n(\omega,x):=\frac1{2B}\int_{-B}^B
 T(\omega,b,x)e^{-i\lambda_nb}\,db,
 \qquad \lambda_n=\frac{\pi n}{B}.
\]
There is a fixed \(C_*>0\), depending only on the fixed data and $Q$,
such that, for every
\(N\geq N_{\phi,k,Q}\), there are functions
\(\widetilde a_N^\sharp\in{\cal V}_{C_*N}\) and
\(H_N^\sharp\in{\cal V}_{C_*N}\otimes H^k(\Omega)\) for which,
simultaneously for every integer \(0\leq s\leq k\),
\begin{equation}
 \begin{aligned}
 &\|\widetilde a_N-\widetilde a_N^\sharp\|_{L^\infty(\Theta)}
 +\sup_{|b|\leq B}E_{C_*N}^{\Sph}(a_N(\cdot,b))\\
 &\qquad
 +\sup_{p\in\Theta}
 \|\psi H_N(p,\cdot)-H_N^\sharp(p,\cdot)\|_{H^s(\Omega)}
 \leq CN^{-Q}\|f\|_{H^k(\R^d)}.
 \end{aligned}
 \tag{4.28}
\end{equation}
In particular, the two product tails used in this conclusion have the
precise form
\begin{equation}
 \begin{aligned}
 \max_{|\alpha|\leq k}\bigg[&
 \sup_{\substack{\omega\in\Sph^{d-1}\\x\in\Omega}}
 \sum_{|n|>C_*N}
 \big|\widehat{\psi\,\partial_x^\alpha H_N}_n(\omega,x)\big|\\
 &+\sup_{\substack{|b|\leq B\\x\in\Omega}}
 E_{C_*N}^{\Sph}\!\left(
 \psi(b)\partial_x^\alpha H_N(\cdot,b,x)\right)\bigg]
 \leq CN^{-Q}\|f\|_{H^k(\R^d)}.
 \end{aligned}                                           \tag{4.29}
\end{equation}
\end{lemma}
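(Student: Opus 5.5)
I will build \(\widetilde a_N^\sharp\) and \(H_N^\sharp\) by truncating the offset Fourier series at \(|n|\le C_*N\) and projecting onto spherical polynomials of degree \(\le C_*N\). All errors will be controlled through the bias and angular indicators (4.10)--(4.11), which part~3 of Lemma~\ref{lem:standard-sigmoids} makes \(O(N^{-Q})\).

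\emph{Offset frequencies of \(\widetilde a_N\) and \(\psi H_N\).} For fixed \(\omega\), \(a_N(\omega,\cdot)\) is bandlimited to \([-2N,2N]\) by (4.3) and the deconvolution. The periodic Fourier coefficients of \(\psi a_N\) are therefore
\[
\widehat{(\psi a_N)}_n(\omega)=\frac{1}{4\pi B}\int_{-2N}^{2N}\widehat a_N(\omega,\tau)\widehat{\psi_c}(\lambda_n-\tau)\,d\tau .
\]
Since \(\psi_c\in C_c^\infty\), we have \(|\widehat{\psi_c}(\xi)|\le C_J(1+|\xi|)^{-J}\). For \(|n|>C_*N\) with \(C_*\ge 4B/\pi\), we have \(|\lambda_n-\tau|\ge \pi|n|/(2B)\). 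Summing over \(n\) and using the \(L^1\) bound (4.24) gives a tail of size \(N^{P_{\rm inv}+\beta_{d,k}+1-J}\|f\|\), which is \(O(N^{-Q})\) for \(J\) large.

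For \(\psi\,\partial_x^\alpha H_N\) I will write
\[
a_N(\omega,b)\,\partial_x^\alpha K_{\sigma_N}^\phi(\omega\cdot x,b)=\omega^\alpha\,\partial_t^{|\alpha|}\!\left[\frac{1}{2\pi}\int_{-2N}^{2N}\widehat a_N(\omega,\tau)e^{i\tau b}K_{\sigma_N}^\phi(t,b)\,d\tau\right],\qquad t=\omega\cdot x .
\]
The \(b\)-Fourier transform of the product is the convolution \(\widehat a_N*\widehat K\). By (4.6) this convolution is controlled by \(t\Phi_1(it s)\widehat G(-\sigma_N s)\) evaluated at \(s=s_{n,\tau}=\tau-\lambda_n\), which is exactly the quantity appearing in (4.10).

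Multiplication by \(\psi\) is the main obstacle. The periodic coefficients of \(\psi\cdot(aK)\) are a discrete convolution of \(\widehat\psi_m\) with the (non-periodic) transform of \(aK\). I would handle it in two steps:
\begin{itemize}
\item Split \(\psi=\psi_c\) on \([-B,B]\), so the integrand is compactly supported and its \(2B\)-periodic coefficients equal \((2B)^{-1}\) times its Fourier transform at \(\lambda_n\).
\item Bound \(\widehat{\psi_c\cdot aK}(\lambda_n)\) by \(\int|\widehat{\psi_c}(\lambda_n-\eta)|\,|\widehat{aK}(\eta)|\,d\eta\), and separate the regions \(|\eta|\le|\lambda_n|/2\) and \(|\eta|>|\lambda_n|/2\).
\end{itemize}
On the first region, the superpolynomial decay of \(\widehat{\psi_c}\) wins. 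On the second region, the bias bound (4.10) gives \(N^{-Q}\) after absorbing \(\mathbb D_N^\phi\) and (4.23).

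If the uncentered term \(\phi(-b/\sigma_N)\) inside \(K\) prevents the Fourier transform of \(aK\) from existing, I will use instead the formula \(\widehat K=t\Phi_1\widehat G\) from (4.6), which is well defined. The \(t\)-derivatives produce the factors \((1+|s|)^j\) that are built into (4.10). This yields the first line of (4.29).

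\emph{Angular bandwidth.} For the spherical term of (4.29) I will approximate \(a_N(\cdot,b)\) and \(\partial_x^\alpha K_{\sigma_N}^\phi(\omega\cdot x,b)\) separately by spherical polynomials of degree \(C_*N/2\), then take the product, which has degree \(\le C_*N\).

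\begin{itemize}
\item For \(a_N(\cdot,b)\): the identity \(a_N(\omega,b)=\frac{1}{2\pi}\int_{-2N}^{2N}\frac{i\tau|\tau|^{d-1}\chi(|\tau|/N)}{\widehat G(\sigma_N\tau)}\widehat f(\tau\omega)e^{i\tau b}\,d\tau\) shows it is an integral over \(\tau\) of functions \(\omega\mapsto\widehat f(\tau\omega)\). Since \(f\) has compact support in \(B_{R_0}\), \(\widehat f(\tau\omega)=\int f(y)e^{-i\tau\omega\cdot y}\,dy\) is a superposition of plane waves \(e^{-i\tau\omega\cdot y}\) with \(|\tau y|\le 2NR_0\). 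Each plane wave is approximated on the sphere at degree \(L\) with error \(\le (e\,2NR_0/L)^{L}\) by Taylor/Bessel truncation, which is superexponentially small once \(L\ge C_*N\) with \(C_*>2eR_0\). Multiplying by the polynomial weights \(D_N^\phi N^{d+1}\) gives the second term of (4.28).
\item For the kernel factor: (4.11) controls the approximation of \(\partial_x^\alpha K_{\sigma_N}^\phi(\omega\cdot x,b)\).
\end{itemize}

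The product error is bounded by the sup norms, using \(\|a_N\|_\infty\le CN^{P_{\rm inv}+\beta}\|f\|\) and the polynomial bound (4.12) on the kernel; the \(\mathbb D_N^\phi\) weighting of (4.11) absorbs these factors.

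\emph{Assembling.} I define \(H_N^\sharp\) by applying the tensor-product projection (spherical truncation followed by Fourier truncation in \(b\)) to \(\psi H_N\), and I build \(\widetilde a_N^\sharp\) the same way. The \(L^\infty\) and \(H^s(\Omega)\) errors are bounded by \(\max_{|\alpha|\le k}\) of the pointwise errors; integrating over the bounded set \(\Omega\) costs only a constant. Because the two truncations interact, I will use near-best approximants with Lebesgue constants polynomial in \(N\) (for example de la Vallée-Poussin means in \(b\) and filtered spherical projections). The combined error is then the sum of the two tails in (4.29), each multiplied by a polynomial factor, and increasing \(Q\) in Lemma~\ref{lem:standard-sigmoids} absorbs that factor. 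Finally I fix \(C_*\) as the maximum of the constants from each step and \(C^{\rm bw}_{\phi,k,Q}\).
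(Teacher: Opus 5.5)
Your proposal is correct and follows the paper's proof essentially step for step: the Schwartz-decay offset tail of $\psi a_N$ via (4.31), the plane-wave Taylor bound for the angular tail of $a_N$, the convolution formula (4.33) with a two-region split of the $\widehat\psi_c$-convolution for the offset tail of $\psi H_N$, the product inequality with (4.11) for its angular tail, and assembly by filtered projections in $\omega$ and $b$. The differences are cosmetic. You split at $|\eta|=|\lambda_n|/2$ rather than at $|\lambda_n-\eta|=\delta_cN$; in either version the far-frequency piece uses the explicit Gaussian/$\operatorname{sech}^2$ decay of $\widehat G$ off the lattice rather than (4.10) verbatim, and the Schwartz piece needs a polynomial bound on $\|\mathcal F_b(a_N\partial_t^jK_{\sigma_N}^\phi)\|_{L^1}$ as in (4.35). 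Also, the de la Vall\'ee-Poussin and smoothly filtered spherical operators you name have uniformly bounded Lebesgue constants, so no enlargement of $Q$ is needed (in Lemma~\ref{lem:standard-sigmoids} that would also change $A_\phi$ and hence $\sigma_N$).
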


\begin{proof}
We first record the spectral cutoffs used below.  Fix an even real
\(\vartheta\in C_c^\infty(\R)\), equal to one on \([-1,1]\) and
supported in \([-2,2]\), and define
\[
 {\cal Q}_L^bT=\sum_n\vartheta(n/L)\widehat T_ne^{i\pi nb/B},
 \qquad
 {\cal Q}_L^\omega U=\sum_{\ell\geq0}\vartheta(\ell/L)
 \operatorname{proj}_\ell U.
\]
Both operators reproduce degrees at most \(L\), vanish above \(2L\),
and are uniformly bounded on \(L^\infty\).  For \({\cal Q}_L^b\) this
follows directly from Poisson summation for its periodic convolution
kernel; for \({\cal Q}_L^\omega\) it follows from the localized spherical
kernel estimate~\cite[Theorem~3.5]{NPW} (and from the same periodic
argument when \(d=2\)).  They act in different parameter variables and
commute with physical derivatives.

We make the offset-frequency estimates explicit.  Set
$P_a=P_{\rm inv}+\beta_{d,k}$.  By (4.24),
\begin{equation}
 \sup_\omega\|\widehat a_N(\omega,\cdot)\|_1
 \leq CN^{P_a}\|f\|_{H^k}.                              \tag{4.30}
\end{equation}
Because $\psi_c$ is supported in $(-B,B)$, the periodic coefficient of
$\widetilde a_N=\psi a_N$ is the sampled Euclidean transform
\begin{equation}
 \widehat{\widetilde a_N}_n(\omega)
 =\frac1{2B}{\cal F}_b(\psi_ca_N)(\omega,\lambda_n)
 =\frac1{4\pi B}\int_{-2N}^{2N}
 \widehat\psi_c(\lambda_n-\tau)\widehat a_N(\omega,\tau)\,d\tau.
 \tag{4.31}
\end{equation}
Choose $C_{\rm off}>2B/\pi$.  If $|n|>C_{\rm off}N$ and $|\tau|\leq2N$, then
\(|\lambda_n-\tau|\geq
(\pi/B-2/C_{\rm off})|n|=:c_{\rm gap}|n|\).
Since $\widehat\psi_c$ is Schwartz, (4.30)--(4.31) imply, for every
$J>1$,
\begin{equation}
 \sup_\omega\sum_{|n|>C_{\rm off}N}
 |\widehat{\widetilde a_N}_n(\omega)|
 \leq C_JN^{P_a+1-J}\|f\|_{H^k}.                        \tag{4.32}
\end{equation}
Taking $J>P_a+Q+1$ proves the required offset tail for the coefficient
and hence the $L^\infty$ estimate after applying ${\cal Q}_{C_{\rm off}N}^b$.

For the angular variable, the definition of \(a_N\) in
Subsection~\ref{sec:continuous-representation} and
Fourier inversion give
\[
 \widehat a_N(\omega,\tau)=i\tau|\tau|^{d-1}
 \chi(|\tau|/N)\widehat G(\sigma_N\tau)^{-1}
 \int_{B_{R_0}}f(y)e^{-i\tau\omega\cdot y}\,dy .
\]
Thus all angular dependence occurs in the plane wave.  Write
\(r=|\tau||y|\leq2NR_0\) and, for \(y\ne0\), put \(e=\operatorname{sgn}(\tau) y/|y|\).  The
degree-\(L\) Taylor polynomial
\[
 P_L(\omega)=\sum_{\ell=0}^L
 \frac{(-ir)^\ell}{\ell!}(\omega\cdot e)^\ell
\]
belongs to \(\mathcal P_L^{\Sph}\).  If \(L+2>r\), the ratio of
successive terms in the exponential tail gives
\[
 E_L^{\Sph}(e^{-ir\omega\cdot e})
 \leq\sum_{\ell>L}\frac{r^\ell}{\ell!}
 \leq \frac{1}{1-r/(L+2)}\frac{r^{L+1}}{(L+1)!}.
\]
Take \(L=\lfloor C_0N\rfloor\) with \(C_0>2eR_0\).  The elementary
bound \((L+1)!\geq((L+1)/e)^{L+1}\) then implies, uniformly for
\(r\leq2NR_0\),
\[
 E_L^{\Sph}(e^{-ir\omega\cdot e})\leq Ce^{-cN}
 \leq C_JN^{-J}\qquad(J>0).
\]
The case \(y=0\) is constant in \(\omega\).  Applying Minkowski in
\(y,\tau\), and absorbing the polynomially growing deconvolution and
Fourier multipliers into the exponential tail, now yields uniformly in
\(b\),
\[
 E_{C_0N}^{\Sph}(a_N(\cdot,b))
 \leq C_JN^{-J}\|f\|_{H^k}
\]
for arbitrary \(J\).
This establishes the middle term of (4.28).  Combining the angular and
offset estimates yields the required approximation of $\widetilde a_N$.

It remains to treat the synthesized product.  For $|\alpha|=j\leq k$,
write
$H_{N,\alpha}=\partial_x^\alpha H_N
=c_d\omega^\alpha a_N\partial_t^jK_{\sigma_N}^\phi$.
Fourier transformation in \(b\) gives
\begin{equation}
 {\cal F}_b\{a_N(\omega,b)\partial_t^jK_{\sigma_N}^\phi(t,b)\}(\eta)
 =\frac1{2\pi}\int_{-2N}^{2N}\widehat a_N(\omega,\tau)
 \partial_t^j\widehat K_{\sigma_N}^\phi(t,\eta-\tau)\,d\tau .
 \tag{4.33}
\end{equation}
For $j=0$, use $|t\Phi_1(-it\xi)|\leq R$; for $j\geq1$, use the
corresponding derivative of (4.6).  Since
$\beta_{d,k}\leq d+1$, (4.10), (4.30), and (4.33) yield
\begin{equation}
 \sup_{\omega,\,x}\sum_{|n|>C_{\rm off}N}
 |{\cal F}_bH_{N,\alpha}(\omega,\lambda_n,x)|
 \leq CN^{-Q}\|f\|_{H^k}                               \tag{4.34}
\end{equation}
after increasing the fixed gap constant $C_{\rm off}$ if necessary.

Multiplication by $\psi_c$ preserves this order because
${\cal F}_b(\psi_cH_{N,\alpha})=(2\pi)^{-1}
\widehat\psi_c*{\cal F}_bH_{N,\alpha}$.  Fix a small $\delta_c>0$ and
split this convolution into $|\xi|\leq\delta_cN$ and
$|\xi|>\delta_cN$.  In the first part, the frequency gap used in (4.10)
is reduced only from $\pi C_{\rm off}/B-2$ to
$\pi C_{\rm off}/B-2-\delta_c$; the Gaussian and $\operatorname{sech}^2$
estimates in Lemma~\ref{lem:standard-sigmoids} therefore prove (4.34)
uniformly for these shifted samples.  In the second part, the lattice
Schwartz bound
\[
 \sup_{\eta\in\R}\sum_{\substack{n\in\mathbb Z\\
 |\lambda_n-\eta|>\delta_cN}}
 |\widehat\psi_c(\lambda_n-\eta)|\leq C_JN^{-J}
\]
is multiplied by $\|{\cal F}_bH_{N,\alpha}\|_1$.  Young's inequality,
(4.6), and (4.30) give the explicit bound
\begin{equation}
 \|{\cal F}_bH_{N,\alpha}\|_1
 \leq C\|\widehat a_N\|_1
 \begin{cases}
  1+\log(2+\sigma_N^{-1}),&j=0,\\
  \sigma_N^{-j},&1\leq j\leq k,
 \end{cases}
 \leq C N^{P_a+k+1}\|f\|_{H^k}.                         \tag{4.35}
\end{equation}
Choosing $J>P_a+k+1+Q$
proves the first term of (4.29), including the normalization factor
$(2B)^{-1}$.

In the angular variable combine the preceding approximation of \(a_N\)
with (4.11) and the product inequality
\[
 E_{L_1+L_2}^{\Sph}(UV)
 \leq\|V\|_\infty E_{L_1}^{\Sph}(U)
 +\|U\|_\infty E_{L_2}^{\Sph}(V)
 +E_{L_1}^{\Sph}(U)E_{L_2}^{\Sph}(V).
\]
The extra factor $\omega^\alpha$ has spherical degree at most $k$.
The required sup norms are bounded by (4.24) and part~1 of
Lemma~\ref{lem:standard-sigmoids}.  More precisely,
$\|a_N\|_\infty\leq CN^{P_a}\|f\|_{H^k}$, and the $N^{d+1}$ factor
in $\mathbb D_N^\phi$ dominates this loss because
$\beta_{d,k}\leq d+1$.  The polynomial local kernel bound multiplying
the superalgebraic angular tail of $a_N$ is absorbed by choosing its
order $J$ larger.  Hence, with
$C_\omega=C_0+C_{\phi,k,Q}^{\rm bw}+k$, the second term of (4.29)
follows from the displayed product inequality.

Finally set
\[
 \widetilde a_N^\sharp={\cal Q}_{C_\omega N}^\omega
 {\cal Q}_{C_{\rm off}N}^b\widetilde a_N,
 \qquad
 H_N^\sharp={\cal Q}_{C_\omega N}^\omega
 {\cal Q}_{C_{\rm off}N}^b(\psi H_N).
\]
The multiplier cutoffs are supported in twice their nominal degrees.
Thus $\widetilde a_N^\sharp\in{\cal V}_{C_*N}$ and
$H_N^\sharp\in{\cal V}_{C_*N}\otimes H^k(\Omega)$ with
$C_*=2\max\{C_{\rm off},C_\omega\}$.  Their uniform boundedness and (4.29)
give (4.28).  The $H^s(\Omega)$ norm is controlled by the finitely many
physical derivatives $|\alpha|\leq s$.  Because (4.29) already takes
the maximum over $|\alpha|\leq k$, this single $C_*$ works
simultaneously for every $s=0,\ldots,k$.
\end{proof}

The angular part of Lemma~\ref{lem:activation-bandwidth} implies the
corresponding estimate for the offset sum (4.19), since its weights have
uniformly bounded total mass.  We record the resulting statement in the form needed by
spherical designs.

\begin{lemma}[Effective angular bandwidth]
\label{lem:angular}
Let \(\phi\) satisfy Assumption~\ref{ass:activation}, with the constants
of Lemma~\ref{lem:standard-sigmoids} fixed at \(Q=Q_*\).  There is a
constant \(C_*>0\), depending only on the fixed data and independent
of \(N\), such that,
for every \(N\geq N_{\phi,k,Q_*}\),
\begin{equation}
 \max_{|\alpha|\leq k}\sup_{x\in\Omega}
 E_{C_*N}^{\Sph}
 \bigl(\partial_x^\alpha F_N^\phi(\cdot,x)\bigr)
 \leq CN^{-Q_*}\|f\|_{H^k(\R^d)}.                       \tag{4.36}
\end{equation}
\end{lemma}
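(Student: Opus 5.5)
The estimate (4.36) reduces to the second term of (4.29) in Lemma~\ref{lem:activation-bandwidth}, applied node by node in the offset sum (4.19), together with a uniform bound on the total offset weight. Fix \(|\alpha|\leq k\) and \(x\in\Omega\). Differentiating (4.19) termwise gives
\[
 \partial_x^\alpha F_N^\phi(\omega,x)
 =h_N\sum_{|\ell h_N|\leq B}a_N(\omega,\ell h_N)\,
 \partial_x^\alpha\phi\!\left(\frac{\omega\cdot x-\ell h_N}{\sigma_N}\right).
\]
For \(|\alpha|\geq1\) each summand equals \(\partial_x^\alpha K_{\sigma_N}^\phi(\omega\cdot x,\ell h_N)\), because the centering term does not depend on \(x\). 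Hence each summand is \(c_d^{-1}\partial_x^\alpha H_N(\omega,\ell h_N,x)\).

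For \(\alpha=0\) the uncentered term \(\phi(-\ell h_N/\sigma_N)\) appears. It is constant in \(\omega\), so \(a_N(\cdot,\ell h_N)\phi(-\ell h_N/\sigma_N)\) has angular error at most \(\|\phi\|_\infty E^{\Sph}_{C_*N}(a_N(\cdot,\ell h_N))\). The middle term of (4.28) controls this by \(CN^{-Q}\|f\|_{H^k}\).

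The main step is to split the offset nodes into inner nodes with \(|\ell h_N|\leq B_0\) and outer nodes with \(B_0<|\ell h_N|\leq B\).

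\emph{Inner nodes.} Here \(\psi(\ell h_N)=1\), since \(\psi_c=1\) near \([-B_0,B_0]\). So \(\partial_x^\alpha H_N=\psi\,\partial_x^\alpha H_N\) at these nodes, and the second term of (4.29) gives
\[
 E^{\Sph}_{C_*N}\bigl(\partial_x^\alpha H_N(\cdot,\ell h_N,x)\bigr)\leq CN^{-Q}\|f\|_{H^k},
\]
uniformly in \(\ell\) and \(x\). Since \(E^{\Sph}_L\) is subadditive and positively homogeneous, the inner contribution is at most \(h_N\cdot\#\{\ell\}\cdot CN^{-Q}\|f\|\). The total weight satisfies \(h_N\cdot\#\{\ell\}\leq 2B+h_N\leq C\), so this part is \(O(N^{-Q}\|f\|)\).

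\emph{Outer nodes.} I will not approximate these by polynomials; I will bound them by the trivial approximant \(P=0\). Then \(E^{\Sph}_L(U)\leq\|U\|_\infty\), and the discrete tail in (4.26) of Lemma~\ref{lem:offset-estimates} bounds
\[
 \sup_\omega h_N\sum_{|\ell h_N|>B_0}\bigl|a_N\,\partial_x^\alpha K_{\sigma_N}^\phi\bigr|\leq CN^{-Q}\|f\|_{H^k}.
\]
For \(\alpha=0\), the outer uncentered term is controlled in the same way as above, through the angular approximability of \(a_N\).

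Taking \(Q=Q_*\), the sum of the inner and outer contributions is \(CN^{-Q_*}\|f\|_{H^k}\). The constant \(C_*\) is the one from Lemma~\ref{lem:activation-bandwidth}, which does not depend on \(N\). This gives (4.36).

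The main obstacle I expect is the bookkeeping for \(\alpha=0\). Splitting \(\phi\) into \(K_{\sigma_N}^\phi\) plus a constant in \(\omega\) must not lose more than polynomial factors in \(N\). This holds because the node count is \(O(N)\) and the weight \(h_N=\kappa/N\) offsets it. I also need to check that the cutoff \(\psi\) equals one exactly at every inner node, which holds since \(\psi_c=1\) on a neighbourhood of \([-B_0,B_0]\).
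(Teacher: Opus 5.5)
Your proposal is correct and is essentially the paper's own argument. You split \(F_N^\phi\) into two parts. The centered sum \(h_N\sum a_N K_{\sigma_N}^\phi\) is handled at inner nodes by the second term of (4.29), since \(\psi=1\) there, and at outer nodes by the discrete physical tail in (4.26) with the zero approximant. The \(x\)-independent part \(h_N\sum a_N(\omega,\ell h_N)\phi(-\ell h_N/\sigma_N)\) is handled by the middle term of (4.28) and \(\|\phi\|_\infty\). Summing with total weight \(h_N\sum_{|\ell h_N|\leq B}1\leq C\) finishes the proof, exactly as in the paper. The ``obstacle'' you flag for \(\alpha=0\) is not one: \(\phi((t-b)/\sigma_N)=K_{\sigma_N}^\phi(t,b)+\phi(-b/\sigma_N)\) is an exact identity, so no power of \(N\) is lost in that split.
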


\begin{proof}
First write the offset sum as
\(F_N^\phi(\omega,x)-F_N^\phi(\omega,0)\) plus its value at zero.
For the centered difference, \(\psi=1\) on \([-B_0,B_0]\), so every
node on which \(\psi\ne1\) lies in the physical tail controlled by
Lemma~\ref{lem:offset-estimates}.  The integrand estimate in (4.29) controls the remaining
nodes uniformly in \(b\).  The value at zero is a weighted sum of
\(a_N(\omega,b)\phi(-b/\sigma_N)\).  It is controlled by the uniform
coefficient angular tail in (4.28) and boundedness of \(\phi\).
Summing the best-approximation errors with weights \(h_N\) costs only
\(h_N\sum_{|\ell h_N|\leq B}1\leq C_B\), which proves (4.36).
\end{proof}

\subsection{Proof of Theorem~\ref{thm:main}}
\label{sec:deterministic-proof}

We now construct the deterministic discrete synthesis operator by combining
the uniform offset rule of Subsection~\ref{sec:offset-quadrature} with an
equal-weight spherical design.  The effective angular bandwidth from
Subsection~\ref{sec:effective-bandwidth} makes the design exact on
the relevant finite-dimensional part of the synthesis integrand.

The following standard estimate converts polynomial exactness and
\(\ell^1\)-stability of the weights into a cubature error controlled by
best uniform approximation.

\begin{lemma}
\label{lem:cubature-transfer}
Let \(\mathcal Q\) be compact with a finite Borel measure
\(\mu_{\mathcal Q}\), and define
\[
 I(H):=\int_{\mathcal Q}H\,d\mu_{\mathcal Q},
 \qquad Q_M(H):=\sum_{j=1}^M w_jH(p_j),
 \qquad \Lambda_M:=\sum_{j=1}^M|w_j|.
\]
Suppose that \(Q_M(P)=I(P)\) for every \(P\) in a finite-dimensional
space \(\mathcal W\subset C(\mathcal Q)\).  Then, for every Banach space
\(Y\) and \(H\in C(\mathcal Q;Y)\),
\begin{equation}
 \|I(H)-Q_M(H)\|_Y
 \leq\bigl(\mu_{\mathcal Q}(\mathcal Q)+\Lambda_M\bigr)
 \inf_{V\in\mathcal W\otimes Y}
 \|H-V\|_{L^\infty(\mathcal Q;Y)},                        \tag{4.37}
\end{equation}
where \(\mathcal W\otimes Y\) consists of the finite sums
\(V=\sum_{r=1}^R P_r y_r\) with \(P_r\in\mathcal W\) and \(y_r\in Y\).
\end{lemma}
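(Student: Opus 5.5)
The plan is the standard Lebesgue-constant argument, with the infimum over \(\mathcal W\otimes Y\) taken at the very end. Fix an arbitrary \(V=\sum_{r=1}^R P_r y_r\in\mathcal W\otimes Y\) with \(P_r\in\mathcal W\) and \(y_r\in Y\). Extend \(I\) and \(Q_M\) to \(Y\)-valued continuous functions: \(I\) as the Bochner integral, which exists because \(H\) is continuous on a compact set and hence bounded and strongly measurable, and \(Q_M\) as the finite weighted sum. Both operations are linear, so exactness transfers to tensors:
\[
 I(V)=\sum_r I(P_r)y_r=\sum_r Q_M(P_r)y_r=Q_M(V).
\]

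Next split the error as
\[
 I(H)-Q_M(H)=I(H-V)-Q_M(H-V).
\]
The first term is bounded with the Bochner inequality \(\|\int F\,d\mu_{\mathcal Q}\|_Y\le\int\|F\|_Y\,d\mu_{\mathcal Q}\), which gives \(\|I(H-V)\|_Y\le\mu_{\mathcal Q}(\mathcal Q)\,\|H-V\|_{L^\infty(\mathcal Q;Y)}\). The second term is bounded with the triangle inequality:
\[
 \|Q_M(H-V)\|_Y\le\sum_j|w_j|\,\|H(p_j)-V(p_j)\|_Y\le\Lambda_M\,\|H-V\|_{L^\infty(\mathcal Q;Y)}.
\]

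One point needs care. Point evaluation of \(H-V\) must be dominated by the sup norm and not merely by the essential sup. This holds because \(H-V\) is continuous. If \(\mu_{\mathcal Q}\) does not have full support, the \(L^\infty\) norm should be read as the supremum over \(\mathcal Q\), which is the natural reading for \(C(\mathcal Q;Y)\). Adding the two bounds and taking the infimum over \(V\) gives (4.37).

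There is no real obstacle here. The only step needing attention is justifying that the linear maps act componentwise on \(\mathcal W\otimes Y\), so that exactness on scalar \(\mathcal W\) passes to tensors, together with the sup-norm interpretation noted above.
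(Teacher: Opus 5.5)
Your proposal is correct and follows the paper's proof exactly: you transfer scalar exactness to \(V\in\mathcal W\otimes Y\) by linearity, write \(I(H)-Q_M(H)=I(H-V)-Q_M(H-V)\), bound the two pieces by \(\mu_{\mathcal Q}(\mathcal Q)\) and \(\Lambda_M\) times \(\|H-V\|_{L^\infty(\mathcal Q;Y)}\), and take the infimum over \(V\). Your remark that the \(L^\infty\) norm must be read as a true supremum, or that \(\mu_{\mathcal Q}\) must have full support, is a correct refinement that the paper leaves implicit; the measures used later (surface measure and the product measure on \(\Theta\)) do have full support.
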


\begin{proof}
For \(V\in\mathcal W\otimes Y\), scalar exactness gives \(I(V)=Q_M(V)\).
Hence
\[
\begin{aligned}
 &\|I(H)-Q_M(H)\|_Y
 =\|I(H-V)-Q_M(H-V)\|_Y\\
 &\leq \|I(H-V)\|_Y+\|Q_M(H-V)\|_Y
 \leq\bigl(\mu_{\mathcal Q}(\mathcal Q)+\Lambda_M\bigr)
 \|H-V\|_{L^\infty(\mathcal Q;Y)}.
\end{aligned}
\]
Taking the infimum over \(V\) proves (4.37).
\end{proof}

Set $t_N=\lceil C_*N\rceil$.  For $d\geq3$, Bondarenko, Radchenko, and
Viazovska~\cite{BRV} proved that, for every
$M\geq C_dt_N^{d-1}$, one can choose spherical-design nodes on
$\Sph^{d-1}$; for $d=2$, the same conclusion follows from equally
spaced points on $\Sph^1$.  Thus one can choose
$X=\{\omega_\nu\}_{\nu=1}^M\subset\Sph^{d-1}$ for which the
equal-weight rule is exact on \(\mathcal P_{t_N}^{\Sph}\):
\begin{equation}
 \frac{1}{\abs{\Sph^{d-1}}}
 \int_{\Sph^{d-1}}P(\omega)\,d\omega
 =
 \frac{1}{M}\sum_{\nu=1}^M P(\omega_\nu)
 \qquad\text{for every }P\in\mathcal P_{t_N}^{\Sph}.
 \tag{4.38}
\end{equation}
We choose such a design with
\(cN^{d-1}\leq M_{\omega,N}\leq C'_dt_N^{d-1}\leq C N^{d-1}\).
Since $t_N\geq C_*N$, (4.38) integrates every spherical polynomial of
degree at most $C_*N$ exactly.  For this positive equal-weight rule,
\(\Lambda_M=\abs{\Sph^{d-1}}\).  Lemma~\ref{lem:cubature-transfer} with
\(Y=\R\) therefore gives, for every
$U\in C(\Sph^{d-1})$,
\begin{equation}
 \left|\int_{\Sph^{d-1}}U(\omega)\,d\omega
 -\frac{\abs{\Sph^{d-1}}}{M_{\omega,N}}
 \sum_{\nu=1}^{M_{\omega,N}}U(\omega_{\nu,N})\right|
 \leq 2\abs{\Sph^{d-1}}E_{C_*N}^{\Sph}(U).                \tag{4.39}
\end{equation}
Applying (4.39) to the centered functions
$\partial_x^\alpha\overline F_N^\phi(\cdot,x)$ and then integrating in $x$
gives, for $m=0,\ldots,k$,
\begin{equation}
 \left\|c_d\int_{\Sph^{d-1}}\overline F_N^\phi(\omega,\cdot)d\omega
 -c_d\frac{\abs{\Sph^{d-1}}}{M_{\omega,N}}
 \sum_{\nu=1}^{M_{\omega,N}}
 \overline F_N^\phi(\omega_{\nu,N},\cdot)
 \right\|_{H^m(\Omega)}
 \leq C N^{-Q_*}\norm f_{H^k}.
 \tag{4.40}
\end{equation}

Set \(L_N=\lfloor B/h_N\rfloor\),
\(a_{\nu\ell,N}:=
\frac{c_d\abs{\Sph^{d-1}}}{M_{\omega,N}}
h_Na_N(\omega_{\nu,N},b_{\ell,N})\), and
define the fully discrete output constant by
\begin{equation}
 a_{0,N}:=f_N(0)-
 c_d\frac{\abs{\Sph^{d-1}}}{M_{\omega,N}}
 \sum_{\nu=1}^{M_{\omega,N}}F_N^\phi(\omega_{\nu,N},0).
 \tag{4.41}
\end{equation}
These coefficients depend linearly on the fixed extension $f=Eu$, and
substitution in (4.19) shows that the network
\eqref{eq:det-network} is exactly the
centered discrete synthesis operator introduced in
Section~\ref{sec:main-results}.
Combining (4.2), (4.27), and (4.40), and then restricting from $\R^d$ to
$\Omega$, gives, for
$m=0,\ldots,k$,
\[
 \norm{u-v_N}_{H^m(\Omega)}
 \leq C\bigl(N^{m-k}+N^{-Q_*}\bigr)\norm u_{H^k(\Omega)}
 \leq C N^{m-k}\norm u_{H^k(\Omega)}.
\]
Thus \eqref{eq:det-rate} holds simultaneously in \(m\), with \(m=0\)
giving the stated \(L^2\) rate.
The $O(N^{d-1})$ directions and $O(N)$ offsets in $[-B,B]$ give
$M_N=O(N^d)$.

Finally, Fourier inversion and (4.24) give
\[
 \abs{a_N(\omega,b)}
 \leq C N^{P_{\rm inv}+\beta_{d,k}}\norm f_{H^k}.
\]
Since $M_{\omega,N}\asymp N^{d-1}$ and $h_N\asymp N^{-1}$,
the definition of $a_{\nu\ell,N}$ yields
\[
 \abs{a_{\nu\ell,N}}
 \leq C N^{P_{\rm inv}+\beta_{d,k}-d}\norm u_{H^k(\Omega)}.
\]
This implies the non-sharp bound \eqref{eq:det-coeff-bound}, for example with
\(\Gamma_{\phi,d,k}=1+P_{\rm inv}+\beta_{d,k}\).  The preceding linearity
completes the proof of Theorem~\ref{thm:main} for the fixed activation
and scale.

\subsection{Random parameter-space cubature}
\label{sec:parameter-bandwidth}

For random sampling we work on
\(\Theta=\Sph^{d-1}\times\Sph_B^1\), with the product measure
\(\mu\), and use the space
\({\cal V}_L=\mathcal P_L^{\Sph}\otimes{\cal T}_L^B\) introduced in
Subsection~\ref{sec:effective-bandwidth}.  Recall
\(\widetilde a_N=\psi a_N\) and
\[
 H_N((\omega,b),x)=
 c_d a_N(\omega,b)K_{\sigma_N}^\phi(\omega\cdot x,b).
\]
Apply Lemma~\ref{lem:activation-bandwidth} with the fixed activation and
scale at order \(Q_*=k+d+1\).  Its \(N^{-Q_*}\) estimate gives both error orders
used below.  Thus there exist a fixed constant \(C_*>0\), an integrand model
\(H_N^\sharp\in{\cal V}_{C_*N}\otimes H^k(\Omega)\), and a coefficient model
\(\widetilde a_N^\sharp\in{\cal V}_{C_*N}\) such that, simultaneously
for \(m=0,\ldots,k\),
\begin{equation}
 \sup_{p\in\Theta}
 \|\psi H_N(p,\cdot)-H_N^\sharp(p,\cdot)\|_{H^m(\Omega)}
 \leq C N^{-k}\|f\|_{H^k(\R^d)},                         \tag{4.42}
\end{equation}
and
\begin{equation}
 \|\widetilde a_N-\widetilde a_N^\sharp\|_{L^\infty(\Theta)}
 \leq C N^{-k-d-1}\|f\|_{H^k(\R^d)}.                   \tag{4.43}
\end{equation}
The coefficient estimate from Lemma~\ref{lem:offset-estimates} is
\begin{equation}
 \|\widetilde a_N\|_{L^2(\Theta)}
 \leq C N^{\gamma_{\phi,d,k}}\|f\|_{H^k(\R^d)}.\tag{4.44}
\end{equation}
We now turn this finite-dimensional parameter model into a random discrete
synthesis operator.  We use Gram-matrix
concentration to establish stable sampling of ${\cal V}_L$ from the
prescribed density and construct bounded exact integration weights.

Retain the surface-times-Lebesgue measure \(\mu\) fixed in
Section~\ref{sec:main-results}, put
\(Z:=\mu(\Theta)=2B|\Sph^{d-1}|\), and choose a real orthonormal basis
$\{e_r\}_{r=1}^{D_L}$ of ${\cal V}_L$ in $L^2(\mu)$.  Write
$\mathsf e(p)=(e_1(p),\ldots,e_{D_L}(p))^{\mathsf T}$.  Because
${\cal V}_L$ is the product of the complete spherical-polynomial space
and the complete trigonometric-polynomial space, rotation and translation
invariance imply that its Christoffel function is constant:
\begin{equation}
 {\cal K}_L(p):=\|\mathsf e(p)\|_{\ell^2}^2
 =\sum_{r=1}^{D_L}|e_r(p)|^2=\frac{D_L}{Z}.
 \tag{4.45}
\end{equation}
Indeed, the transitive action of
$SO(d)\times\Sph_B^1$ preserves both $\mu$ and ${\cal V}_L$, so the
diagonal of the projection kernel is constant.  Integrating it over
\(\Theta\) gives
$Z{\cal K}_L=\sum_{r=1}^{D_L}\|e_r\|_{L^2(\mu)}^2=D_L$.
Let $p_1,\ldots,p_M$ be independent with law
$d\nu=\rho\,d\mu$, where $\rho\geq\rho_0>0$ $\mu$-a.e., and form the
$M\times D_L$ weighted sampling matrix
\({\bf A}_{jr}:=e_r(p_j)/\sqrt{M\rho(p_j)}\).  Its Gram matrix
satisfies
\(\mathbb E({\bf A}^*{\bf A})
=\int_\Theta\overline{\mathsf e(p)}\mathsf e(p)^{\mathsf T}\,d\mu(p)
=I_{D_L}\).

\begin{lemma}[Random Gram-matrix concentration~\cite{Tropp}]
\label{lem:random-gram}
For every $0<\varepsilon<1$,
\begin{equation}
 \mathbb P\{\|{\bf A}^*{\bf A}-I_{D_L}\|>\varepsilon\}
 \leq 2D_L e^{-c\varepsilon^2M/D_L},
 \tag{4.46}
\end{equation}
where $c>0$ depends only on $Z\rho_0$.  On the complementary event,
\begin{equation}
 (1-\varepsilon)\|P\|_{L^2(\mu)}^2
 \leq\frac1M\sum_{j=1}^M
       \frac{|P(p_j)|^2}{\rho(p_j)}
 \leq(1+\varepsilon)\|P\|_{L^2(\mu)}^2
 \quad(P\in{\cal V}_L).
 \tag{4.47}
\end{equation}
\end{lemma}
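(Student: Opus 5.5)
The plan is to write \({\bf A}^*{\bf A}\) as a sum of independent rank-one positive semidefinite matrices and apply the matrix Chernoff inequality of Tropp. For each sample set
\[
 {\bf X}_j:=\frac{1}{M\rho(p_j)}\,\overline{\mathsf e(p_j)}\,\mathsf e(p_j)^{\mathsf T},
 \qquad {\bf A}^*{\bf A}=\sum_{j=1}^M{\bf X}_j .
\]
The \({\bf X}_j\) are independent and Hermitian positive semidefinite. Because \(d\nu=\rho\,d\mu\), each satisfies \(\mathbb E{\bf X}_j=M^{-1}\int_\Theta\overline{\mathsf e}\,\mathsf e^{\mathsf T}\,d\mu=M^{-1}I_{D_L}\). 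The sum therefore has mean \(I_{D_L}\), so \(\mu_{\min}=\mu_{\max}=1\).

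The key step is a uniform almost-sure bound on the summands. Each \({\bf X}_j\) has rank one, so its norm equals its trace. By the constant Christoffel function (4.45) and \(\rho\geq\rho_0\) \(\mu\)-a.e. (hence \(\nu\)-a.s.),
\[
 \|{\bf X}_j\|=\frac{\mathcal K_L(p_j)}{M\rho(p_j)}=\frac{D_L}{MZ\rho(p_j)}\leq\frac{D_L}{MZ\rho_0}=:R .
\]
The matrix Chernoff bounds then give
\[
 \mathbb P\{\lambda_{\min}\leq1-\varepsilon\}\leq D_L\Bigl[\tfrac{e^{-\varepsilon}}{(1-\varepsilon)^{1-\varepsilon}}\Bigr]^{1/R},
 \qquad
 \mathbb P\{\lambda_{\max}\geq1+\varepsilon\}\leq D_L\Bigl[\tfrac{e^{\varepsilon}}{(1+\varepsilon)^{1+\varepsilon}}\Bigr]^{1/R}.
\]
For \(0<\varepsilon<1\), both bracketed quantities are at most \(e^{-\varepsilon^2/3}\). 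Since \(1/R=MZ\rho_0/D_L\), a union bound yields (4.46) with \(c=Z\rho_0/3\). This constant depends only on \(Z\rho_0\), as claimed.

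For (4.47), work on the complementary event, where every eigenvalue of \({\bf A}^*{\bf A}\) lies in \([1-\varepsilon,1+\varepsilon]\). Expand \(P\in\mathcal V_L\) as \(P=\sum_r c_re_r\). Orthonormality gives \(\|P\|_{L^2(\mu)}^2=\|c\|_{\ell^2}^2\), and a direct computation gives \(\|{\bf A}c\|_{\ell^2}^2=M^{-1}\sum_j|P(p_j)|^2/\rho(p_j)\). The quadratic form \(c^*{\bf A}^*{\bf A}c\) is then sandwiched between \((1\mp\varepsilon)\|c\|^2\), which is exactly (4.47).

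The only point needing care is the almost-sure bound \(R\). It rests on \(\rho\geq\rho_0\) holding \(\nu\)-almost surely, which follows from the \(\mu\)-a.e. hypothesis because \(\nu\ll\mu\). It also relies on (4.45), so that no sup of the Christoffel function larger than \(D_L/Z\) enters the bound. If one instead treats \(\mathcal V_L\) as a complex space and uses real basis functions, one should check that the Hermitian form of Tropp's result is the version being applied. This is routine.
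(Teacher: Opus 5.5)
Your proposal is correct and follows essentially the same route as the paper. Both arguments write \({\bf A}^*{\bf A}\) as a sum of independent rank-one positive semidefinite matrices, bound the summands uniformly via the constant Christoffel function (4.45) and \(\rho\geq\rho_0\), apply the matrix Chernoff inequality to both spectral tails with a union bound, and read off (4.47) from the identity \(\|{\bf A}c\|_2^2=M^{-1}\sum_j|P(p_j)|^2/\rho(p_j)\). The only differences are that you normalize each summand by \(1/M\) and make the constant explicit as \(c=Z\rho_0/3\), which the paper leaves implicit.
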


\begin{proof}
Put \(\mathbf Z_j:=\rho(p_j)^{-1}
\overline{\mathsf e(p_j)}\mathsf e(p_j)^{\mathsf T}\).
These are independent positive semidefinite matrices,
$\mathbb E \mathbf Z_j=I_{D_L}$, and, by (4.45),
\(0\leq \mathbf Z_j\leq (Z\rho_0)^{-1}D_L I_{D_L}\).
The matrix Chernoff inequality applied to
$M^{-1}\sum_j\mathbf Z_j={\bf A}^*{\bf A}$ gives
\(\mathbb P\{\lambda_{\min}({\bf A}^*{\bf A})<1-\varepsilon\}
\leq D_L e^{-c\varepsilon^2M/D_L}\),
and the same bound, with a possibly different constant $c>0$, for
$\lambda_{\max}>1+\varepsilon$.  Their union is (4.46).
If $P=\sum_r\xi_re_r$ and
$\boldsymbol\xi=(\xi_1,\ldots,\xi_{D_L})^{\mathsf T}$, then
$\|P\|_2^2=\|\boldsymbol\xi\|_2^2$ and the empirical expression in
(4.47) is $\|{\bf A}\boldsymbol\xi\|_2^2$.  Thus the two spectral bounds
are exactly (4.47).
\end{proof}

\begin{lemma}
\label{lem:random-weights}
On the event (4.47), there are weights $w_1,\ldots,w_M$ such that
\begin{equation}
 \int_\Theta P\,d\mu=\sum_{j=1}^Mw_jP(p_j)
 \quad(P\in{\cal V}_L),
 \qquad
 \sum_{j=1}^M|w_j|
 \leq\left(\frac{Z}{\rho_0(1-\varepsilon)}\right)^{1/2}.
 \tag{4.48}
\end{equation}
\end{lemma}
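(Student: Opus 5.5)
The natural construction is the minimum-norm (least-squares) weights built from the weighted sampling matrix \({\bf A}\). Write any \(P\in{\cal V}_L\) as \(P=\sum_r\xi_re_r\). Then \(\int_\Theta P\,d\mu=\langle\boldsymbol\xi,\mathsf g\rangle\), where \(\mathsf g:=\int_\Theta\mathsf e\,d\mu\) (real basis, so conjugations are harmless). Since \(\mathsf g\) is the coefficient vector of the \(L^2(\mu)\)-projection of the constant \(1\) onto \({\cal V}_L\), and constants belong to \({\cal V}_L\), Parseval gives \(\|\mathsf g\|_2^2=\|1\|_{L^2(\mu)}^2=Z\). The sample values are \(P(p_j)=\sqrt{M\rho(p_j)}\,({\bf A}\boldsymbol\xi)_j\).

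On the event (4.47) the smallest eigenvalue of \({\bf A}^*{\bf A}\) is at least \(1-\varepsilon>0\), so \({\bf A}\) has full column rank. Define \(\mathbf v:={\bf A}({\bf A}^*{\bf A})^{-1}\mathsf g\in\R^M\) and
\[
 w_j:=\frac{v_j}{\sqrt{M\rho(p_j)}}.
\]
Then
\[
 \sum_jw_jP(p_j)=\mathbf v^{\mathsf T}{\bf A}\boldsymbol\xi=\mathsf g^{\mathsf T}({\bf A}^*{\bf A})^{-1}{\bf A}^*{\bf A}\boldsymbol\xi=\langle\mathsf g,\boldsymbol\xi\rangle=\int_\Theta P\,d\mu,
\]
which is exactness on \({\cal V}_L\).

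For the \(\ell^1\) bound, first compute
\[
 \|\mathbf v\|_2^2=\mathsf g^{\mathsf T}({\bf A}^*{\bf A})^{-1}\mathsf g\leq(1-\varepsilon)^{-1}Z.
\]
Cauchy--Schwarz then gives
\[
 \sum_j|w_j|\leq\|\mathbf v\|_2\Bigl(\sum_j\frac1{M\rho(p_j)}\Bigr)^{1/2}\leq\|\mathbf v\|_2\,\rho_0^{-1/2},
\]
using \(\rho(p_j)\geq\rho_0\), which holds almost surely for the samples. Combining the two yields \(\sum_j|w_j|\leq\bigl(Z/(\rho_0(1-\varepsilon))\bigr)^{1/2}\), exactly (4.48).

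The only delicate point is the identification \(\|\mathsf g\|_2^2=Z\). It needs \(1\in{\cal V}_L\), which holds because degree-zero spherical polynomials and the zero trigonometric frequency are included. The null set where \(\rho<\rho_0\) can be discarded since it has probability zero. Everything else is linear algebra on the full-rank Gram matrix.
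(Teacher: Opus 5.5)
Your proof is correct and is essentially the paper's argument written in coordinates. Your $\mathsf g$ is the coefficient vector of the Riesz representer ${\bf 1}$, and $\mathbf v={\bf A}({\bf A}^*{\bf A})^{-1}\mathsf g$ is the paper's $z$ in (4.49); the bound $\|\mathbf v\|_2^2\le Z/(1-\varepsilon)$ followed by Cauchy--Schwarz with $\rho\ge\rho_0$ is exactly (4.50) and the paper's concluding step.
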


\begin{proof}
Regard ${\bf A}:{\cal V}_L\to\mathbb C^M$ as
$({\bf A}P)_j=P(p_j)/\sqrt{M\rho(p_j)}$.
By (4.47), ${\bf A}^*{\bf A}$ is invertible.  Since the Riesz
representer of $P\mapsto\int P\,d\mu$ is the constant function
${\bf1}$, define
\begin{equation}
 z={\bf A}({\bf A}^*{\bf A})^{-1}{\bf1}.
 \tag{4.49}
\end{equation}
Then ${\bf A}^*z={\bf1}$ and, under the convention that the inner product
is linear in its first argument,
\[
 \int P\,d\mu=\langle P,{\bf1}\rangle
 =\langle{\bf A}P,z\rangle
 =\sum_{j=1}^M
   \frac{\overline{z_j}}{\sqrt{M\rho(p_j)}}P(p_j).
\]
Thus $w_j=\overline{z_j}/\sqrt{M\rho(p_j)}$.  Moreover,
\begin{equation}
 \|z\|_2^2
 =\langle{\bf1},({\bf A}^*{\bf A})^{-1}{\bf1}\rangle
 \leq(1-\varepsilon)^{-1}\|{\bf1}\|_2^2
 =Z(1-\varepsilon)^{-1}.
 \tag{4.50}
\end{equation}
Cauchy--Schwarz and $\rho(p_j)\geq\rho_0$ now give
\(\sum_j|w_j|\leq\|z\|_2
(\frac1M\sum_j\rho(p_j)^{-1})^{1/2}
\leq[Z/(\rho_0(1-\varepsilon))]^{1/2}\), which proves (4.48).
Because the basis and the sampling factors are real, the matrices in
(4.49), the vector $z$, and hence the weights $w_j$ are real.
\end{proof}

\begin{proof}[Proof of Theorem~\ref{thm:random}]
Let \(C_*\) be the bandwidth constant in (4.42)--(4.43) and put
\(L=C_*N\).  Since $D_L\leq C L^d\leq C N^d$,
Lemma~\ref{lem:random-gram} with
$\varepsilon=1/2$ gives an event whose failure probability is at most
$CN^d e^{-cM/N^d}$.  The assumption
$M\geq C N^d\log(N/\delta)$ makes this at most $\delta$.
The event depends only on the sample and is therefore simultaneous in
$u$.

On this event use the weights of Lemma~\ref{lem:random-weights}.
They integrate $H_N^\sharp(\cdot,x)\in{\cal V}_L$ exactly for every
$x$.  First replace the integral over \({\cal C}\) by the integral of
\(\psi H_N\) over \(\Theta\).  Since \(1-\psi\) is supported where
\(|b|>B_0\), the physical-tail part of
Lemma~\ref{lem:offset-estimates} makes the discarded tail
superalgebraically small.  Apply
Lemma~\ref{lem:cubature-transfer} with
\(\mathcal Q=\Theta\), \(\mathcal W={\cal V}_L\),
\(Y=H^m(\Omega)\), and approximant \(H_N^\sharp\).  Equations (4.42)
and (4.48) then give,
simultaneously for \(m=0,\ldots,k\),
\begin{equation}
 \left\|\int_{\cal C}H_N(p,\cdot)\,d\mu(p)
 -\sum_{j=1}^Mw_j\psi(b_j)H_N(p_j,\cdot)\right\|_{H^m(\Omega)}
 \leq CN^{-k}\|u\|_{H^k(\Omega)}.
 \tag{4.51}
\end{equation}
By (4.4), the fact that \(1-\psi\) is supported in \(|b|>B_0\), and the
physical-tail conclusion of Lemma~\ref{lem:offset-estimates}, the integral in (4.51) equals
$f_N-f_N(0)$ up to an
$O(N^{-k})\|u\|_{H^k}$ remainder in every $H^m(\Omega)$, $m\leq k$.
Lemma~\ref{lem:cutoff} contributes
$CN^{m-k}\|u\|_{H^k}$ in $H^m$.  Since $N^{-k}\leq N^{m-k}$, this proves
\eqref{eq:random-N-rate}, including its \(m=0\) case.  Finally, set
\(a_j:=c_dw_j\widetilde a_N(p_j)\).
Using the representation
$w_j=\overline{z_j}/\sqrt{M\rho(p_j)}$ in
Lemma~\ref{lem:random-weights}, Cauchy--Schwarz gives
\[
 \sum_{j=1}^M|a_j|
 \leq C\|z\|_2
 \left(\frac1M\sum_{j=1}^M
 \frac{|\widetilde a_N(p_j)|^2}{\rho(p_j)}\right)^{1/2}.
\]
Insert $\widetilde a_N^\sharp$ from (4.43).  The Gram estimate (4.47)
controls its empirical norm, while (4.43) and $\rho\geq\rho_0$ control the
remainder.  More explicitly, with
$r_N=\widetilde a_N-\widetilde a_N^\sharp$, Minkowski's inequality,
(4.47), and $\rho\geq\rho_0$ give
\[
 \begin{aligned}
 \left(\frac1M\sum_{j=1}^M
 \frac{|\widetilde a_N(p_j)|^2}{\rho(p_j)}\right)^{1/2}
 &\leq (1+\varepsilon)^{1/2}
 \|\widetilde a_N^\sharp\|_{L^2(\mu)}
 +\rho_0^{-1/2}\|r_N\|_{L^\infty(\Theta)}\\
 &\leq C\|\widetilde a_N\|_{L^2(\mu)}
 +C\|r_N\|_{L^\infty(\Theta)}.
 \end{aligned}
\]
The second inequality also uses
$\|r_N\|_{L^2(\mu)}\leq Z^{1/2}\|r_N\|_{L^\infty(\Theta)}$.
Equations (4.43), (4.44), and (4.50) therefore imply
\(\sum_{j=1}^M|a_j|
\leq C N^{\gamma_{\phi,d,k}}\|u\|_{H^k(\Omega)}\).
The corresponding centered constant is
\(a_0:=f_N(0)-\sum_{j=1}^M a_j\phi(-b_j/\sigma_N)\).
Fourier inversion, Cauchy--Schwarz, and
$\operatorname{supp}\widehat f_N\subset B_{2N}$ give
\[
 |f_N(0)|\leq C\left(\int_{|\zeta|\leq2N}(1+|\zeta|^2)^{-k}\,d\zeta\right)^{1/2}
 \|f\|_{H^k(\R^d)}\leq C N^{\gamma_{\phi,d,k}}\|u\|_{H^k(\Omega)}.
\]
Indeed, the integral is bounded, grows like $\log N$, or grows like
$N^{d-2k}$ according as $k>d/2$, $k=d/2$, or $k<d/2$; the definition
of $\gamma_{\phi,d,k}$ and \(P_{\rm inv}\geq0\) dominate
all three cases.
Since \(\phi\) is bounded, it follows that
\(|a_0|\leq |f_N(0)|+\sum_{j=1}^M|a_j|
\leq C N^{\gamma_{\phi,d,k}}\|u\|_{H^k(\Omega)}\).
Thus \eqref{eq:random-coeff-bound} follows.  Provided the sampling budget satisfies
\(M\geq C N_0^d\log(N_0/\delta)\), the largest integer \(N\geq N_0\)
satisfying \(M\geq C N^d\log(N/\delta)\) obeys
\(N\geq c(M/\log(M/\delta))^{1/d}\),
because, for a sufficiently small fixed $c>0$, the integer part of the
right-hand side satisfies the same inequality.  Here
$\log(N/\delta)\leq C\log(M/\delta)$ for $N\leq M$ and $0<\delta<1$.
The weights depend only on the sample, whereas $f_N$ and $a_N$ depend
linearly on $u$, so the resulting approximation operator is linear.
This gives \eqref{eq:random-width-rate} and completes the proof of
Theorem~\ref{thm:random} for the fixed activation and scale.
\end{proof}

\appendix
\section{Numerical experiment settings}
\label{app:numerical-settings}

\subsection{Targets and error measurement}

All experiments use \(\Omega=[0,1]^d\), with centered coordinates for the
ten-dimensional feature evaluation.  The targets are finite random Fourier
series with controlled Sobolev regularity.  For a
finite set \(\Lambda\subset\mathbb Z^d\setminus\{0\}\) containing one
representative of each retained frequency pair, we set
\begin{equation}
 \widetilde u_{k,\Lambda}(x)=
 \sum_{n\in\Lambda}
 \frac{a_n\cos(2\pi n\cdot x)+b_n\sin(2\pi n\cdot x)}
 {(1+|n|^2)^{(k+d/2)/2}\log(2+|n|)},
 \qquad a_n,b_n\stackrel{\rm iid}{\sim}{\cal N}(0,1).
 \tag{A.1}
\end{equation}
The coefficient decay corresponds to the \(H^k\) regularity threshold.  All
targets are normalized before fitting, and each normalized target is denoted by
\(u\).

For a fitted approximation \(v_M\), the reported relative \(H^m\) error is
\begin{equation}
 \mathcal E_m(v_M;u)
 :=\frac{\|u-v_M\|_{H^m(\Omega)}}{\|u\|_{H^m(\Omega)}},
 \qquad m=0,1,2,
 \tag{A.2}
\end{equation}
Thus \(m=0\) gives the relative \(L^2\) error.  Each target and frequency set
is held fixed across widths.  The discrete \(L^2\) least-squares systems are
assembled from physical-space point sets chosen independently of the targets
and feature dictionaries.  In two dimensions, the deterministic experiment in
Figure~\ref{fig:num-2d} uses a \(129\times129\) midpoint grid; for the random
experiments, we use \(n_{\mathrm{tr}}^2\) tensor-product points, where
\(n_{\mathrm{tr}}\) is the smallest odd integer not less than
\(\max\{65,\sqrt{2(M+1)}\}\).  Figure~\ref{fig:num-2d} uses midpoint grids,
whereas Figure~\ref{fig:num-sobolev-2d} uses endpoint grids with tensor-product
Simpson weights.  The three-dimensional deterministic experiment likewise uses
tensor-product Simpson quadrature, while the ten-dimensional experiments use
fixed scrambled Sobol point sets; in all cases, the number of training points
exceeds the dictionary width.  For the \(H^1\) and \(H^2\) tests, the same
\(L^2\)-fitted coefficients are retained, and all derivatives used to evaluate
the Sobolev errors are computed analytically.

Empirical orders are the negative least-squares slopes of
\(\log\mathcal E_m\) against \(\log W\), with \(W=M\) for deterministic
dictionaries and \(W=M/\log(N/\delta)\) for random dictionaries.  All
displayed widths enter the fit.  Figure~\ref{fig:num-2d} reports medians and
interquartile ranges over ten target (deterministic) or dictionary (random)
realizations.  Because the ten-dimensional least-squares solves are
substantially more expensive, the right panel of
Figure~\ref{fig:num-3d-10d} uses three independent target seeds, shared across
all four regularities, and reports the corresponding medians and interquartile
ranges.

\subsection{Feature dictionaries}

All offsets lie in \([-2,2]\).  The deterministic constructions in
Figure~\ref{fig:num-2d} and the left panel of
Figure~\ref{fig:num-3d-10d} use target-independent, uniform direction--offset
grids with \(M=N^d\).  The random constructions use
\[
 M=\lceil N^d\log(N/\delta)\rceil,\qquad \delta=0.01,
\]
and their errors are plotted against the effective width
\(M/\log(N/\delta)\).

All random direction--offset pairs are sampled independently from fixed
target-independent distributions whose densities are bounded away from zero.
We use the unit-prefactor scales \(\sigma_N=\log(N)/N\) for \(\tanh\) and
\(\sigma_N=\sqrt{\log N}/N\) for \(\erf\). For the three-dimensional deterministic experiment, we use
\(\sigma_N=\sqrt{4\log N}/N\) to mitigate pre-asymptotic effects while
preserving the theoretical scaling.

\begin{table}[H]
\setlength{\abovecaptionskip}{4pt}
\setlength{\belowcaptionskip}{4pt}
\caption{Empirical convergence orders by problem setting
(predicted values in parentheses).}
\label{tab:empirical-orders}
\centering
\scriptsize
\setlength{\tabcolsep}{5pt}
\renewcommand{\arraystretch}{0.90}
\begin{tabular}{@{}ccccc@{\hspace{8pt}}c@{}}
\toprule
$d$ & Error & Activation & Dictionary & $k$ & Empirical order \\
\midrule
\multirow{4}{*}{$2$}
 & \multirow{2}{*}{$L^2$}
 & \multirow{2}{*}{$\tanh$}
 & deterministic & $2,4,6$ & $1.17\,(1),\ 2.16\,(2),\ 3.19\,(3)$ \\
 & & & random & $2,4,6$ & $0.86\,(1),\ 1.79\,(2),\ 2.82\,(3)$ \\
\cmidrule(l){2-6}
 & $H^1$
 & \multirow{2}{*}{$\erf$}
 & \multirow{2}{*}{random}
 & $2,4,6$ & $0.66\,(0.5),\ 1.59\,(1.5),\ 2.66\,(2.5)$ \\
 & $H^2$ & & & $3,4,6$ & $0.65\,(0.5),\ 1.03\,(1),\ 2.12\,(2)$ \\
\midrule
$3$  & $L^2$ & $\erf$  & deterministic & $2,4,6$    & $0.75\,(0.67),\ 1.44\,(1.33),\ 2.02\,(2)$ \\
$10$ & $L^2$ & $\erf$  & random        & $2,5,10,20$ & $0.37\,(0.2),\ 0.68\,(0.5),\ 1.00\,(1),\ 1.96\,(2)$ \\
\bottomrule
\end{tabular}
\end{table}

\end{document}